\theoremstyle{plain}
\newtheorem{thm}{Theorem}[section]
\newtheorem{lem}{Lemma}[section]
\theoremstyle{remark}
\theoremstyle{definition}
\newtheorem{defn}{Definition}[section]
\newtheorem{exmp}{Example}[section]
\title{Equivariant fixed point formulae and Toeplitz operators under Hamiltonian torus actions}
\author{Andrea Galasso \footnote{\noindent{\bf Address:} Room 407, Chee-Chun Leung Cosmology Hall, National Taiwan University; {\bf e-mail}: andrea.galasso@ncts.ntu.edu.tw}}
\date{}
\begin{document}
\maketitle

\begin{abstract} The main result of this paper is the description of asymptotics along rays in weight space of traces of equivariant Toeplitz operators composed with quantomorphisms for torus actions. The main ingredient in the proof is the microlocal analysis of the equivariant Szeg\"o kernels.  \end{abstract}

\section{Introduction}

Let $M$ be a $\mathrm{d}$-dimensional Hodge manifold with complex structure $J$ and Hodge form $\omega$. Then there exists a positive line bundle $A$ on $M$ with Hermitian structure $h$ and unique compatible connection $\nabla$ with curvature form $-2\pi\imath\,\omega$. We denote with $X$ the circle bundle lying in the dual line bundle $A^\vee$, with circle action $r\,:\, S^1\times X\rightarrow X$, projection $\pi:X\rightarrow M$ and $\partial_\theta$ the generator of the structure circle action on it. Hence $X$ is naturally a contact and Cauchy-Riemann manifold by positivity of $A$; if $\alpha$ is the contact form, $X$ inherits a volume form $\mathrm{dV}_X$. 

Suppose given, in addition, an action $\mu :G\times M\rightarrow M$ of a compact Lie group $G$, which is holomorphic with respect to the complex structure $J$ and Hamiltonian with moment map $\Phi:M\rightarrow \mathfrak{g}^\vee$, where $\mathfrak{g}$ denotes the Lie algebra of $G$. By \cite{k} the action $\mu$ naturally induces an infinitesimal contact action of $\mathfrak{g}$ on the circle bundle $X$. Explicitly, if $\xi\in \mathfrak{g}$ and $\xi_M$ is the corresponding Hamiltonian vector field on $M$, then its contact lift $\xi_X$ is as follows.
Let $\mathbf{v}^\sharp$ denote the horizontal lift on $X$ of a vector field $\mathbf{v}$ on $M$, we have
\begin{equation}
 \label{eqn:contact vector field}
\xi_X:= \xi_M^\sharp-\langle \Phi_G\circ \pi, \xi\rangle \,\partial_\theta.
\end{equation}

Furthermore, suppose that the infinitesimal action \eqref{eqn:contact vector field} can be integrated to an action $\tilde{\mu}:G\times X\rightarrow X$ of $G$. By hypothesis it preserves the Cauchy-Riemann structure and the contact form $\alpha$. There is a naturally induced unitary representation of $G$ on the Hardy space
$H(X)\subset L^2(X)$ given by
\[ g\,:\,f \mapsto f\circ \tilde{\mu}_{g^{-1}}\,. \] 
Thus $H(X)$ can be equivariantly decomposed over the irreducible representations of the group $G$:
\begin{equation}
 \label{eqn:decomposition of H(X)}
H(X)=\bigoplus _{\varpi\in \widehat{G}} H(X)_{\varpi},
\end{equation}
where $\widehat{G}$ is the collection of all irreducible representations of $G$; for each $\varpi\in \widehat{G}$ we denote with $\chi_{\varpi}$ the corresponding character. As is well-known, if $\Phi (m)\neq 0$ for every $m\in M$, then each isotypical component $H(X)_{\varpi}$
is finite dimensional (see e.g. \S 2 of \cite{pao-IJM}). The corresponding projector $$\Pi_{k\varpi}\,:\,L^2(X)\rightarrow H_{\varpi}(X)$$ is called the equivariant Szeg\"o projector. The aim of this paper is to apply equivariant Szeg\"o kernels $\Pi_{k\varpi}(x,\,y)$ to the asymptotic study of a class of trace formulae in equivariant geometric quantization and algebraic geometry in the setting of ``ladder representations'', see \cite{guillemin-sternberg hq}, \cite{pao-IJM}, \cite{pao-loa}, \cite{gp} and \cite{gp-asian}. 

Suppose we a have biholomorphism $\gamma_{M}\,:\,M\rightarrow M$ admitting a linearization to a unitary automorphism $\gamma_{A}$ of $(A,\,h)$. Then, $\gamma_M$ is a symplectomorphism and it induces a contactomorphism $\gamma_X\,:\, X \rightarrow X$. If $\gamma_M$ commutes with the action of $G$ then we have $\gamma_X\circ \tilde{\mu}_g =\tilde{\mu}_g\circ \gamma_X$ for every $g\in G$. Thus, we have an equivariant quantomoprhism
$${\gamma}_{k\varpi}\,:\,H_{k\varpi}(X)\rightarrow H_{k\varpi}(X)$$ 
which is a unitary automorphism of $H_{k\varpi}(X)$. In this article we will focus on the abelian case: $G$ is a $\mathrm{g}$-dimensional torus $\mathbb{T}$. Let us pause to give a simple example inspired from \cite{pao-IJM}. 

\begin{exmp} \label{exmp}
	Let us consider the unitary representation $\cdot\,:\,\mathbb{T}^1 \times \mathbb{C}^3 \rightarrow \mathbb{C}^3$ given by
	\[t \cdot (z_0,\, z_1,\, z_3) =: (t\, z_0,\, t^2\, z_1,\, t^3\, z_2)\,.\] 
	Consider a unitary diagonal matrix $\Gamma \in U(4)$. Both $\Gamma$ and $\cdot$ descend to actions on $\mathbb{P}^2$ with linearizations to the hyperplane line bundle $A$. The action commutes with the holomorphic symplectomorphism $\gamma$ induced by $\Gamma$. The moment map  $\Phi\,:\,\mathbb{P}^2\rightarrow \mathbb{R}$ associated to $\cdot$ is given by
	\[\Phi([z_0:z_1:z_2])=\frac{\lvert z_0 \rvert^2+2\,\lvert z_1 \rvert^2+3\,\lvert z_2 \rvert^2}{\lvert z_0 \rvert^2+\lvert z_1 \rvert^2+\lvert z_2 \rvert^2}\,. \]	
	
	The space $H(X)$ can be identified with the space of homogeneous polynomials of any degrees; $H_{\varpi}(X)$ is the subspace of those monomials $z_0^a\,z_1^b\,z_2^c$ such that $a+2b+3c=\varpi$. Notice that $a+b+c$ is not constant and thus $H_{\varpi}(X)$ is not contained in any space of homogeneous polynomials of given degree. Anyhow $\Gamma$ induces an action given by
	\[(\Gamma\cdot f)(\mathbf{z})=:f(\Gamma^{-1}\,\mathbf{z})\,, \]
	which preserves the isotypes $H_{\varpi}(X)$.
\end{exmp}

Every smooth function $f = f (m)$ on $M$ lifts to a $r$-invariant function $f = f (x)$ on $X$. We are led to the following definition, which is the main object of study of this paper.

\begin{defn} \label{def:psi} For any $\varpi \in \widehat{G}=\mathbb{Z}^g$, $k\in \mathbb{N}$  and smooth function $f$ we define equivariant Toeplitz operators
	\[T^{k\varpi}_{f}\,=\,\Pi_{k\varpi}\circ M_f \circ \Pi_{k\varpi}\,:\, H_{k\varpi}(X) \rightarrow H_{k\varpi}(X) \]
where $M_f$ denotes the multiplication operator. More generally we should consider the composition
\[\Psi_{k\varpi}=\Psi_{k\varpi}(\gamma,\,f)=: \tilde{\gamma}_{k\varpi}\circ T_f^{(k\varpi)}\,:\,H_{k\varpi}(X) \rightarrow H_{k\varpi}(X)\,. \]
\end{defn}

For each $\varpi \in \mathbb{Z}^g$, we shall prove that the trace of $\Psi_{k\varpi}$ admits a complete asymptotic expansion in decreasing power of $k$ and we shall give an explicit expression of the leading term. For the circle action case the study of Toeplitz operators in this setting was already developed in \cite{pao-loa}. 

Before giving the statement of the main theorem, some notations are
needed. Let us denote with $M_{\varpi}$ the pre-image via $\Phi$ of the ray $\mathbb{R}_+\cdot \varpi$. Under transversality assumption $M_{\varpi}$ (if non-empty) is a connected compact submanifold of $M$, of real codimension $\mathrm{g}-1$ (see Lemma $2.3$ in \cite{pao-IJM}). Fix $m\in M_{\varpi}$, the tangent space to the leaf of null foliation through $m$ is given by
\[ \mathrm{val}_m\left(\ker(\Phi(m)) \right)= \mathrm{val}_m\left(\mathrm{diag}(\varpi)^{\perp_{\mathfrak{t}}}\right)\,, \]
where $\mathrm{val}_m$ is the linear map $\xi \rightarrow \xi_M(m)$ induced by the valuation and $\perp_{\mathfrak{t}}$ denotes the orthogonal complement with respect to the standard scalar product $\langle\cdot,\,\cdot \rangle_{\mathfrak{t}}$ on $\mathfrak{t}$. Let us denote with $H=\exp_{\mathbb{T}}\left( \ker(\Phi(m)) \right)$. Thus, the space $\overline{M}_{\varpi}=: M_{\varpi}/H$ is an orbifold of complex dimension $\mathrm{e}=\mathrm{d}-\mathrm{g}+1$; we denote with $p_{\overline{M}_{\varpi}}\,:\, M_{\varpi}\rightarrow \overline{M}_{\varpi}$ the corresponding projection. Indeed, by the Slice Theorem a neighborhood of any orbit $H\cdot m=m_0$ is equivariantly diffeomorphic to a neighborhood of the zero section of the associated principal bundle $$H\times_{H_m} N_m\,,$$ where $N_m$ is the conormal space to $H\cdot m$ in $M_{\varpi}$ and $H_m$ is the stabilizer of $m$ for the action of $H$, which is finite dimensional (see Lemma $2.7$ in \cite{pao-IJM}). Therefore, for some $\epsilon>0$ and $B_{2\mathrm{e}}(\epsilon)\subseteq N_m$, one has a homeomorphism $B_{2\mathrm{e}}(\epsilon)/H_m \cong \overline{U}$ onto some neighborhood of $m_0$ in $\overline{M}_{\varpi}$. 

Given the weight $\varpi \in \mathbb{Z}^{g}\subseteq \mathfrak{t}$, we can define the circle group $S$ whose action $\nu\,:\,S\times M \rightarrow M$ on $M$ (respectively $\tilde{\nu}\,:\,S\times X \rightarrow X$ on $X$) is given by restriction of the action of $\mathbb{T}$. Let us denote with $\mathbb{T}_m$ the stabilizer of a point $m\in M$. The cardinality $\lvert\mathbb{T}_m\rvert$ need not be constant on $M$, but it does attain a generic minimal value $\lvert\mathbf{T}\rvert$ on some dense open subset $M'$, where $\mathbf{T} \subseteq \mathbb{T}$ is the stabilizer of each point in $M'$ (Corollary $B.47$ of \cite{ggk}). As a consequence $H_m$ equals a fixed value $\mathbf{H}$ on $M'$. 

Let us denote with $\overline{X}_{\varpi}=: X_{\varpi}/\mathbb{T}$ and $p_{\overline{X}_{\varpi}}\,:\,X_{\varpi}\rightarrow \overline{X}_{\varpi}$ the projection; we have the following diagram
\begin{align} 
&X_{\varpi}'  \,\xrightarrow{\pi} \,M_{\varpi }' \nonumber\\
p_{\overline{X}_{\varpi}}&\downarrow   \qquad\quad \downarrow \,p_{\overline{M}_{\varpi}} \label{eq: diagram} \\
&\overline{X}_{\varpi }'  \qquad\overline{M}_{\varpi}' \nonumber
\end{align}
Under the previous assumptions $\gamma_X$ descends naturally to an automorphism $\gamma_{\overline{X}_{\varpi}}\,:\,\overline{X}_{\varpi} \rightarrow \overline{X}_{\varpi}$ whose fixed point locus has $\ell\in \mathbb{N}$ connected components denoted by $F_{\overline{X}_{\varpi},1},\,\dots,\,F_{\overline{X}_{\varpi},\ell}$, each of which has complex dimension $\mathrm{d}_l$ and complex co-dimension $\mathrm{c}_{l}=: \mathrm{e}-\mathrm{d}_l$. Each component $F_{\overline{X}_{\varpi},1}$ pull-backs to the $r\times \nu$-invariant locus $\widetilde{F}_{\overline{X}_{\varpi},l}=:p^{-1}_{\overline{X}_{\varpi}}(F_{\overline{X}_{\varpi},\,l})$, which descends naturally to the locus $F_{\overline{M}_{\varpi},l}$ in $\overline{M}_{\varpi}$ thanks to diagram \eqref{eq: diagram}. Furthermore, given $x\in \widetilde{F}_{\overline{X}_{\varpi},l}$, there exists $\kappa_{j}$ for each $j=1,\dots, \lvert\mathbf{T}\rvert$ such that $\gamma_X(x)=\tilde{\mu}_{\kappa_j}(x)$. The last piece of notations consists in giving the following two definitions.

\begin{defn} \label{def:cl} Let $N_{{\overline{M}_{\varpi}},\,l}(m_0)$ be the normal space to $F_{\overline{M}_{\varpi},\,l}$ at $m_0$ in $\overline{M}_{\varpi}$, and let $\gamma_{N_l,\,m_0}\,:\,N_{{\overline{M}_{\varpi}},\,l}(m_0)\rightarrow N_{{\overline{M}_{\varpi}},\,l}(m_0)$ be te unitary map induced by the differential of $\gamma_{\overline{M}_{\varpi}}$. For each $t_j\in \mathbf{T}$ the map $\mathrm{d}_m\mu_{t_j}$ descends naturally to a map on $N_{{\overline{M}_{\varpi}},\,l}(m_0)$. For each $t\in \mathbf{T}$ we define the complex number
	\[c_l^{(j)}=: \mathrm{det}_{\mathbb{C}}\left(\mathrm{id}_{N_{l,\,m_0}}-\mathrm{d}_{m_0}\gamma_{\overline{M}_{\varpi}}^{-1}\circ \mathrm{d}_{m_0}\mu_{t_j} \right),\, \]
notice that it is non-zero and it is constant on $F_{{\overline{M}_{\varpi}},\,l}$.
\end{defn}

\begin{defn}
	Let us define:
	\begin{align*}
	\overline{f}(m)=\int_{\mathbb{T}} f(\mu_t(m)) \mathrm{dV}_{\mathbb{T}}(t)
	\end{align*}
\end{defn}

\begin{thm}\label{thm:main} Suppose that $f$ is a smooth function on $M$, $\gamma\,:\, M\rightarrow M$ given as above and $\boldsymbol{0}\notin \Phi(M)$. For each $\varpi \in \widehat{G}$ let $\Psi_{k\varpi}$ as in Definition \ref{def:psi}. 
	\begin{itemize}
		\item If $M_{\varpi}= \emptyset$, then $\Psi_{k\varpi}=O(k^{-\infty})$ for $k\gg 0$.
		\item If  $M_{\varpi}\neq \emptyset$ assume that $\Phi$ is transverse to the ray through $\varpi$. Then as $k\rightarrow +\infty$ there is an asymptotic expansion \begin{align*}
\mathrm{trace}\left(\Psi_{k\varpi}\right) \sim &\, \sum_{l=1}^{\ell}\left(\lVert \varpi \rVert\,\frac{k}{\pi}  \right)^{\mathrm{d}+1-\mathrm{g}-\mathrm{c}_l} \, \sum_{j=1}^{ \lvert\mathbf{T}\vert} \frac{\chi_{\varpi}(\kappa_j)^k}{c_l^{(j)}(\gamma)}\, \\
&\cdot\frac{1}{\lvert\mathbf{H}\rvert}\,\int_{F_{\overline{M}_{\varpi},l}} {\lVert \Phi(m) \rVert}  ^{-(\mathrm{d}+2-\mathrm{g}-\mathrm{c}_l)}\cdot \overline{f}(m)\, \mathrm{dV}_{F_{\overline{M}_{\varpi},l}}(m) \\ &\cdot \left(1+\sum_{a\geq 1}k^{-a/2}\,R_{a,l,\varpi}\right)\,. \end{align*}
	\end{itemize} 
\end{thm}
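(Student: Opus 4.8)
The plan is to express $\mathrm{trace}(\Psi_{k\varpi})$ as an integral over $X$ of the equivariant Szegő kernel composed with the quantomorphism and the multiplication operator, and then apply stationary phase. Concretely, since $\Psi_{k\varpi} = \tilde\gamma_{k\varpi}\circ\Pi_{k\varpi}\circ M_f\circ\Pi_{k\varpi}$, its Schwartz kernel is obtained by pulling back $\Pi_{k\varpi}(x,y)$ under $\gamma_X$ in the first slot and multiplying by $f$, so
\[
\mathrm{trace}(\Psi_{k\varpi}) = \int_X \Pi_{k\varpi}\bigl(\gamma_X(x),\,x\bigr)\,f(x)\,\mathrm{dV}_X(x).
\]
The first bullet is then immediate: if $M_\varpi=\emptyset$ the equivariant Szegő kernel is $O(k^{-\infty})$ uniformly (this is the standard ``off-diagonal/empty locus'' rapid decay from the ladder-representation microlocal analysis, e.g.\ \cite{pao-IJM}), so the trace is $O(k^{-\infty})$. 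For the second bullet, I would first average over $\mathbb{T}$: writing $\Pi_{k\varpi}$ as $\chi_\varpi$ paired against the translates of the full Szegő projector $\Pi$, one gets
\[
\mathrm{trace}(\Psi_{k\varpi}) = \int_{\mathbb{T}}\overline{\chi_{k\varpi}(t)}\int_X \Pi\bigl(\tilde\mu_{t^{-1}}\gamma_X(x),\,x\bigr)\,f(x)\,\mathrm{dV}_X(x)\,\mathrm{dV}_{\mathbb{T}}(t),
\]
and here one may as well replace $f$ by $\overline f$ by further averaging, since the $\mathbb{T}$-integral already symmetrizes.

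Next I would insert the Boutet de Monvel–Sjöstrand/Zelditch description of the Szegő kernel as a Fourier integral operator with complex phase, $\Pi(x,y)\sim\int_0^\infty e^{\imath u\psi(x,y)}s(u,x,y)\,du$ with $\Im\psi\ge 0$ vanishing exactly on the diagonal, and $\mathrm{Re}(\imath\psi)<0$ away from it. After the substitution $u\mapsto k u$ (absorbing the weight scaling), the composite phase in the $(t,x,u)$-integral is
\[
\Xi_k(t,x,u) \;=\; \imath u\,\psi\bigl(\tilde\mu_{t^{-1}}\gamma_X(x),x\bigr) - \imath k\,\langle\varpi,\vartheta(t)\rangle,
\]
where $\vartheta$ is the angular coordinate on $\mathbb{T}$. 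The real part of the effective exponent forces concentration on the set where $\tilde\mu_{t^{-1}}\gamma_X(x)$ and $x$ lie on the same contact fiber, i.e.\ where $\gamma_X(x) = \tilde\mu_t r_\theta(x)$ for some $\theta$; combined with the stationary-phase conditions in $t$ and $u$ this pins $x$ down to the horizontal lift of $M_\varpi$ (the moment-map constraint $\Phi(m)\parallel\varpi$ enters exactly here, via the $\partial_\theta$-component of $\xi_X$ in \eqref{eqn:contact vector field}) and further to the points where $\gamma_X$ acts as $\tilde\mu_{\kappa_j}$, i.e.\ the preimages $\widetilde F_{\overline M_\varpi,l}$ of the fixed-point components $F_{\overline X_\varpi,l}$. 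This is the step where all the geometry set up before the theorem — the null foliation, the quotient $\overline M_\varpi$, the stabilizers $\mathbf T$, $\mathbf H$, the $\kappa_j$ — gets used.

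The final step is the Hessian computation. Along each component of the critical manifold the transverse Hessian of $\Xi_k$ factors into (a) the ``radial'' directions transverse to $M_\varpi$ inside $X$, which contribute the factor $\|\Phi(m)\|^{-(\cdots)}$ and the power of $k/\pi$ via the same Gaussian integral as in the non-equivariant Tian–Zelditch expansion, (b) the directions along the $\mathbb{T}$-orbit and the conormal $N_m$, which after desingularizing the orbifold quotient contribute $\frac{1}{|\mathbf H|}$ and turn the $X$-integral into an integral over $F_{\overline M_\varpi,l}\subset\overline M_\varpi$ against $\mathrm{dV}_{F_{\overline M_\varpi,l}}$, and (c) the normal directions to the fixed locus of $\gamma_{\overline X_\varpi}$, whose Hessian determinant is precisely $\det_{\mathbb C}(\mathrm{id}-\mathrm{d}\gamma^{-1}\circ\mathrm{d}\mu_{t_j}) = c_l^{(j)}(\gamma)$ of Definition \ref{def:cl}, while the value of the oscillatory factor at the critical point produces $\chi_\varpi(\kappa_j)^k$. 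Summing the $\ell$ components and the $|\mathbf T|$ sheets, and reading off that the leading power is $\mathrm d+1-\mathrm g-\mathrm c_l$, gives the stated expansion; the half-integer powers $k^{-a/2}$ in the remainder are the usual lower-order stationary-phase corrections, which are real because the sub-principal data are. \textbf{The main obstacle} I anticipate is (b): carefully justifying the partition/slice argument near singular orbits so that the oscillatory integral over $X'_\varpi$ descends to a genuine integral over the orbifold $\overline M_\varpi$ with the correct rational factor $1/|\mathbf H|$, uniformly in $k$, and checking that the contributions of the non-principal stratum $M_\varpi\setminus M'_\varpi$ are negligible; this is exactly where the arguments of \cite{pao-IJM} for the torus case must be combined with the fixed-point/quantomorphism analysis of \cite{pao-loa}.
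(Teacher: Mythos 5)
Your strategy is sound and you have identified every ingredient that actually appears in the answer (localization first to $X_{\varpi}$ and then to the fixed locus, the factorization of the transverse Hessian, and the origins of $\chi_{\varpi}(\kappa_j)^k$, $c_l^{(j)}(\gamma)$, $1/\lvert\mathbf{H}\rvert$ and the powers of $\lVert\Phi\rVert$), but your technical route is genuinely different from the paper's. You propose to expand $\Pi_{k\varpi}$ as a character integral over $\mathbb{T}$ of translates of the full Szeg\"o projector, insert the Boutet de Monvel--Sj\"ostrand parametrix, and run stationary phase in $(t,x,u)$ from scratch; this amounts to re-deriving the equivariant scaling asymptotics of \cite{pao-IJM} inside the proof. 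The paper instead uses those asymptotics as a black box: it localizes the trace integral by the rapid-decay estimate off $X_{\varpi}$ (Lemma \ref{lem:rapdec}) combined with a quantitative distance estimate to $\mathrm{Fix}(\gamma_{\overline{X}_{\varpi}})$ (Lemmas \ref{lem:TmFix} and \ref{lem:distquotient}, which is where the nondegeneracy hypothesis that $\mathrm{d}\gamma_{\overline{X}_{\varpi}}$ has no eigenvalue $1$ on the normal bundle is used), then builds equivariant Heisenberg coordinates adapted to the $H$-action and the $\lvert\mathbf{H}\rvert$-to-one covering (Lemmas \ref{lem:musigma}--\ref{lem:changeofcoor}), and finally substitutes the near-diagonal expansion \eqref{eq:piknu} after the $k^{-1/2}$ rescaling and evaluates the explicit Gaussian integrals \eqref{eq:secondint} and \eqref{eq:firstint}. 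Your route buys self-containedness and makes transparent where the moment-map condition $\Phi\parallel\varpi$ and the factor $\lVert\varpi\rVert/\lVert\Phi\rVert$ come from; the paper's route confines the new work to the fixed-point geometry and the quantomorphism, which is the genuinely novel part here. The one place where your sketch underestimates the work is exactly the obstacle you flag yourself: descending the oscillatory integral to the orbifold $\overline{M}_{\varpi}$ with the correct factor $1/\lvert\mathbf{H}\rvert$ and handling the finite stabilizers $\mathbf{T}$ (which is why the answer carries a sum over $j=1,\dots,\lvert\mathbf{T}\rvert$ of sheets $\kappa_j=\kappa\, t_j$, not a single fixed-point contribution) occupies Lemmas \ref{lem:varpicov} and \ref{lem:changeofcoor}; and the localization to $\widetilde F_{\overline{X}_{\varpi},l}$ is not automatic from the damping of the phase but requires the two-sided comparison \eqref{eq:distineq} between the normal coordinate and the Riemannian distance on the quotient. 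These are fillable along your route as well, so I see no fatal gap, only a proposal that stops at the level of asserting, rather than computing, the Hessian factorization in step (c).
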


Let us briefly describe how this asymptotics are related to prior literature. First, If $\tilde{\gamma}$ is the identity and $f=1$, the trace of $\Psi_{k\varpi}$ computes $\dim (H_{k\varpi}(X))$ as $k$ goes to infinity. Of course these dimensions was already obtained in \cite{pao-IJM}. For general $f$ we are considering the asymptotics of the trace of the level-$k$ equivariant Toeplitz operator. For the circle action, these operators were studied in \cite{pao-loa} where formulae for compositions and commutators were proved. From the other side, if the action is the standard circle action and $f=1$ we provide a simple approaches for the asymptotics of the Lefschetz fixed point formula. 

The main tool in the proof is the local asymptotics for the projectors $\Pi_{k\varpi}$ as $k \rightarrow +\infty$ for torus actions contained in \cite{pao-IJM}. The proof of Theorem $4$ in \cite{pao-IJM} is based on techniques from micro-local analysis and it relies on asymptotics expansions for Szeg\"o kernels, see \cite{boutet-guillemin}, \cite{bs}, \cite{sz} and \cite{z}.

Finally let us remark that, the approach in proof of Theorem \ref{thm:main} is similar to \cite{pao-fpf} even if the perspective is different in the present paper. In fact, in \cite{pao-fpf} projectors appearing in Definition \ref{def:psi} are replaced by
\[\Pi_{k,\,\varpi}\,:\, L^2(X)\rightarrow  H_{k,\,\varpi}(X) \]
where $H_{k,\,\varpi}(X):=H_k(X)\cap H_{\varpi}(X)$ and $H_k(X)$ denote the isotypes of the standard circle action and they can be identified with $H^0(M,\,A^{\otimes k})$. Hence, the analysis in \cite{pao-fpf} fits with the general theme of Toeplitz quantization, see \cite{charles}, \cite{mm}, \cite{schlichenmaier} and \cite{mz} for the equivariant setting. Instead in our case $H_{k\varpi}(X)$ need not be contained in a space of global sections see Example \ref{exmp}. 

Our analysis builds on microlocal techniques that can be also applied in the almost complex symplectic setting, see \cite{sz}. For the sake of simplicity, we have restricted our discussion to the complex projective setting.

\bigskip

\textbf{Acknowledgments} I gratefully thank Prof. R. Paoletti for suggesting me the study of this problem.

\section{Proof of Theorem \ref{thm:main}}

Let $\mathrm{dV}_{X\times X}$ and $\mathrm{dV}_X$ denote the volume densities of $X\times X$ and $X$, respectively. Then,
\begin{align} \label{eq:tracedef}
\mathrm{trace}\left(\Psi_{k\varpi} \right)&=\int_{X\times X} \Pi_{k\varpi}\left(\gamma^{-1}_X(x),\,y\right)\,f(y)\, \Pi_{k\varpi}\left(y,\,x\right)\,\mathrm{dV}_{X\times Y}(x,\,y) \\
&= \int_{X\times X} \Pi_{k\varpi}\left(x,\,\gamma_X(y)\right)\,f(y)\, \Pi_{k\varpi}\left(y,\,x\right)\,\mathrm{dV}_{X\times Y}(x,\,y)\, \nonumber \\
&=\int_X \Pi_{k\varpi} \left(\gamma^{-1}_X(y),\,y\right)\,f(y)\,\mathrm{dV}_{X}(y)\,. \nonumber
\end{align}

Let us set
\[\mathfrak{m}(x,\,y)=:\mathrm{max}\left\{\mathrm{dist}_X\left(\mathbb{T}\cdot x,\,X_{\varpi}\right),\,\mathrm{dist}_X\left(\mathbb{T}\cdot x,\,\mathbb{T}\cdot y\right)\right\}\,.\]
The first step in the proof consists in recalling the following Theorem of \cite{pao-IJM}.

\begin{lem}[Theorem 3 of \cite{pao-IJM}] \label{lem:rapdec}
	Assume that $\boldsymbol{0}\notin \Phi(M)$ and $\Phi$ is transversal to $\mathbb{R}_+\cdot \varpi$. Let $C,\,\epsilon >0$. Then, uniformly for 
	\[\mathfrak{m}(x,\,y)\geq C\, k^{\epsilon-1/2} \,, \]
	we have ${\Pi}_{k\varpi}(x,\,y)=O(k^{-\infty})$ as $k\rightarrow +\infty$.
\end{lem}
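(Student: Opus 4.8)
The approach follows the microlocal strategy of \cite{pao-IJM}; here is a sketch.

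Since the full Szeg\"o projector $\Pi$ commutes with the $\mathbb{T}$-action, I would begin by writing
\[
\Pi_{k\varpi}(x,y)=\int_{\mathbb{T}}\overline{\chi_{k\varpi}(g)}\;\Pi\!\left(x,\tilde{\mu}_g(y)\right)\,\mathrm{dV}_{\mathbb{T}}(g),
\]
and splitting $\mathbb{T}$ into the set of $g$ for which $\tilde{\mu}_g(y)$ lies in a fixed small neighbourhood of $x$ and its complement. On the complement $\Pi\!\left(x,\tilde{\mu}_g(y)\right)$ is smooth in $(g,x,y)$ and $k$-independent, so that piece is a Fourier coefficient of a smooth function at frequency $k\varpi$; since $\varpi\neq\boldsymbol{0}$ (the value $\varpi=\boldsymbol{0}$ being excluded by $\boldsymbol{0}\notin\Phi(M)$), it is $O(k^{-\infty})$ uniformly. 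On the near-diagonal range I would insert the Boutet de Monvel--Sj\"ostrand parametrix $\Pi(x,y)\sim\int_0^{+\infty}e^{\imath t\psi(x,y)}\,s(x,y,t)\,\mathrm{d}t$ (see \cite{boutet-guillemin}), where $\mathrm{Im}\,\psi\geq 0$ with $\mathrm{Im}\,\psi(x,y)\gtrsim \mathrm{dist}_X(x,y)^2$ locally, $\psi(x,x)=0$, $\mathrm{d}_y\psi(x,y)\big|_{y=x}=-\alpha_x$, and $s$ is a classical symbol of order $\mathrm{d}$ in $t$.

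Next I would rescale $t=k\,\sigma$, so that the total phase becomes $k\,\Theta$ with $\Theta(\sigma,g)=\sigma\,\psi\!\left(x,\tilde{\mu}_g(y)\right)-\langle\varpi,\vartheta\rangle$, writing $g=\exp_{\mathbb{T}}\vartheta$. A short computation locates the critical points of $\Theta$ at which $\mathrm{Im}\,\Theta=0$: stationarity in $\sigma$ forces $\psi(x,\tilde{\mu}_g y)=0$, i.e.\ $x=\tilde{\mu}_g(y)$; then, using that $\mathbb{T}$ is abelian and that \eqref{eqn:contact vector field} gives $\alpha(\xi_X)=-\langle\Phi\circ\pi,\xi\rangle$, stationarity in $\vartheta$ becomes $\sigma\,\Phi(\pi(x))=\varpi$, i.e.\ $\pi(x)\in M_{\varpi}$. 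Hence the critical locus is exactly $\{\mathfrak{m}(x,y)=0\}$, and, more to the point, transversality of $\Phi$ to $\mathbb{R}_+\cdot\varpi$ converts this into the quantitative estimate
\[
\lvert\partial_\vartheta\,\mathrm{Re}\,\Theta\rvert\;\gtrsim\;\sigma\,\mathrm{dist}_X\!\left(\mathbb{T}\cdot x,\,X_{\varpi}\right)\;-\;C_0\,\sigma\,\mathrm{dist}_X\!\left(\mathbb{T}\cdot x,\,\mathbb{T}\cdot y\right),
\]
together with $\lvert\partial_\vartheta\,\mathrm{Re}\,\Theta\rvert\gtrsim\lVert\varpi\rVert$ whenever $\sigma$ is small.

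It then remains to prove rapid decay when $\mathfrak{m}(x,y)\geq C\,k^{\epsilon-1/2}$, by iterated integration by parts in $\vartheta$ (and, on the symbol tail, in $\sigma$), splitting the $\sigma$-range. For $\sigma$ near $0$ the gradient $\partial_\vartheta\mathrm{Re}\,\Theta$ is bounded below by a constant, so each integration by parts gains a full power of $k^{-1}$. For $\sigma$ bounded away from $0$ two complementary mechanisms apply: if $\mathrm{dist}_X(\mathbb{T}\cdot x,\mathbb{T}\cdot y)\geq c\,k^{\epsilon-1/2}$ then $\mathrm{Im}\,\psi(x,\tilde{\mu}_g y)\gtrsim k^{2\epsilon-1}$ for all $g$, so the amplitude carries the factor $e^{-c'\sigma k^{2\epsilon}}$, which beats every power of $k$; otherwise $\mathrm{dist}_X(\mathbb{T}\cdot x,X_{\varpi})\geq c\,k^{\epsilon-1/2}$, the error term in the displayed estimate is then negligible, $\lvert\partial_\vartheta\mathrm{Re}\,\Theta\rvert\gtrsim\sigma\,k^{\epsilon-1/2}$, and each integration by parts nets a gain of $k^{-2\epsilon}$ (the bottleneck being the term where the derivative falls on $1/\partial_\vartheta\Theta$). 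Iterating either mechanism $N$ times and letting $N\to\infty$ yields $O(k^{-\infty})$, uniformly in $(x,y)$. I expect the main obstacle to be exactly this last bookkeeping: the error in the gradient estimate involving one distance is controlled by the other, so one must organise the $g$-partition and the constant $C$ so that the two mechanisms cover all of $\{\mathfrak{m}\geq C\,k^{\epsilon-1/2}\}$ with no gap, and one must check that the symbol tail as $\sigma\to+\infty$ and the derivatives falling on $s$ and on $1/\partial_\vartheta\Theta$ contribute only $k$-uniform constants.
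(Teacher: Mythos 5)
The paper does not actually prove this statement: it is imported verbatim as Theorem 3 of \cite{pao-IJM}, so the only ``internal'' argument is the citation. Your sketch reconstructs essentially the proof given in that reference (and in the related papers \cite{pao-fpf}, \cite{pao-loa}): average the full Szeg\"o kernel against $\overline{\chi_{k\varpi}}$, discard the part of $\mathbb{T}$ that keeps $\tilde{\mu}_g(y)$ away from $x$ by oscillation in $g$, insert the Boutet de Monvel--Sj\"ostrand parametrix on the remaining piece, rescale $t=k\sigma$, identify the critical locus of the phase with $\{\mathfrak{m}(x,y)=0\}$ via $\alpha(\xi_X)=-\langle\Phi\circ\pi,\xi\rangle$ together with transversality of $\Phi$ to $\mathbb{R}_+\cdot\varpi$, and then gain $k^{-2\epsilon}$ per integration by parts on $\{\mathfrak{m}\geq C\,k^{\epsilon-1/2}\}$, with the exponential damping from $\mathrm{Im}\,\psi$ covering the other half of the dichotomy; this is the correct strategy and your two-mechanism bookkeeping is the one carried out there. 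One parenthetical is off: $\boldsymbol{0}\notin\Phi(M)$ does not exclude the weight $\varpi=\boldsymbol{0}$ (for $\varpi=\boldsymbol{0}$ the projector $\Pi_{k\varpi}=\Pi_0$ is $k$-independent and generally nonzero, e.g.\ it contains the constants, so the rapid-decay statement is vacuous or false in that case); the hypothesis $\varpi\neq\boldsymbol{0}$ is implicit in transversality to the ray $\mathbb{R}_+\cdot\varpi$ and should simply be stated, since your treatment of the far-from-orbit region genuinely needs the oscillation of $\chi_{k\varpi}$.
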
 

We can introduce a bump function $\varrho\,:\,[0,\,+\infty)\rightarrow \mathbb{R}$ with compact support such that $\geq 0$, supported in $[0,\,2\,C]$ and $\varrho\equiv 1$  on $[0,\,C]$. Hence, by Lemma \ref{lem:rapdec}, we have
\begin{align} \label{eq:tracevarrho}
\mathrm{trace}\left(\Psi_{k\varpi} \right)=\int_X \varrho\left(k^{1/2-\epsilon}\,\mathfrak{m}(\gamma^{-1}_X(x),\,x)\right)\Pi_{k\varpi} \left(\gamma^{-1}_X(x),\,x\right)\,f(x)\,\mathrm{dV}_{X}(x)\,. 
\end{align}

We now refine once more the set of integration. Let us recall that $\mathrm{Fix}(\gamma_{\overline{X}_{\varpi}})$  is the fixed locus of $\gamma_{\overline{X}_{\varpi}}\,:\,\overline{X}_{\varpi} \rightarrow \overline{X}_{\varpi}$ and $p_{\overline{X}_{\varpi}}\,:\, X_{\varpi} \rightarrow \overline{X}_{\varpi}$ the projection. We will prove the following lemma.

\begin{lem} \label{lem:TmFix}
	There exists $C>0$ such that
	\[\mathrm{dist}_X\left(\mathbb{T}\cdot x,\,p^{-1}_{X,\,\varpi}({\mathrm{Fix}({\gamma}_{\overline{X}_{\varpi}})}) \right) \leq C\,k^{\epsilon-1/2},\,   \]
	for all $x$ in the support of \eqref{eq:tracevarrho}.
\end{lem}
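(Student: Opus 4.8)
The plan is to use Lemma~\ref{lem:rapdec} to localize the support of the integrand in \eqref{eq:tracevarrho}, and then argue by elementary Riemannian geometry on the quotient orbifold $\overline{X}_{\varpi}$; the only real work will be a displacement estimate for the isometry $\gamma_{\overline{X}_{\varpi}}$.

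First I would unwind the support condition. If $x$ lies in the support of \eqref{eq:tracevarrho}, then since $\varrho$ vanishes outside $[0,2C]$ we have $\mathfrak{m}(\gamma_X^{-1}(x),x)\le 2C\,k^{\epsilon-1/2}$; writing $y=\gamma_X^{-1}(x)$, this means at once $\mathrm{dist}_X(\mathbb{T}\cdot y,X_{\varpi})\le 2C\,k^{\epsilon-1/2}$ and $\mathrm{dist}_X(\mathbb{T}\cdot y,\mathbb{T}\cdot x)\le 2C\,k^{\epsilon-1/2}$. Since $\mathbb{T}$ preserves $\Phi$ it preserves $X_{\varpi}$, and it acts on $X$ by isometries (at worst, by bi-Lipschitz maps); replacing $y$ by a suitable $\tilde{\mu}_t(y)$ — which changes neither $\mathbb{T}\cdot y$ nor $\mathbb{T}\cdot x=\mathbb{T}\cdot\gamma_X(y)$ — I may assume in addition $\mathrm{dist}_X(y,X_{\varpi})\le 2C\,k^{\epsilon-1/2}$, and I fix a nearest point $y'\in X_{\varpi}$.

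Next I would transport the approximate $\gamma_X$-invariance from $y$ onto $X_{\varpi}$ and push it down. Here one uses that $\gamma_M$ preserves $\Phi$ (being a $\mathbb{T}$-equivariant biholomorphism with a unitary linearization of $(A,h)$, hence preserving $\nabla$ and $\Phi$; in any case this is implicit in the very definition of $\gamma_{\overline{X}_{\varpi}}$), so that $\gamma_X$ preserves $X_{\varpi}$, commutes with $\mathbb{T}$, and is bi-Lipschitz on the compact manifold $X$. A chain of triangle inequalities turns $\mathrm{dist}_X(\mathbb{T}\cdot y,\mathbb{T}\cdot\gamma_X(y))\le 2C\,k^{\epsilon-1/2}$ into $\mathrm{dist}_X(\mathbb{T}\cdot y',\mathbb{T}\cdot\gamma_X(y'))\le C_1 k^{\epsilon-1/2}$; since $y'$ and a suitable $\mathbb{T}$-translate of $\gamma_X(y')$ both lie on the compact submanifold $X_{\varpi}$ at small ambient distance, intrinsic and extrinsic distances on $X_{\varpi}$ are comparable for $k\gg 0$, so the same bound holds for the intrinsic distance $\mathrm{dist}_{X_{\varpi}}$. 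Now I quotient by $\mathbb{T}$: the action on $X_{\varpi}$ is isometric with finite stabilizers, so $\overline{X}_{\varpi}=X_{\varpi}/\mathbb{T}$ is a compact Riemannian orbifold for which $\mathrm{dist}_{\overline{X}_{\varpi}}(p_{\overline{X}_{\varpi}}(a),p_{\overline{X}_{\varpi}}(b))=\mathrm{dist}_{X_{\varpi}}(\mathbb{T}\cdot a,\mathbb{T}\cdot b)$, and $\gamma_X$ descends to the isometry $\gamma_{\overline{X}_{\varpi}}$. Writing $\bar{y}'=p_{\overline{X}_{\varpi}}(y')$, this gives $\mathrm{dist}_{\overline{X}_{\varpi}}(\bar{y}',\gamma_{\overline{X}_{\varpi}}(\bar{y}'))\le C_2 k^{\epsilon-1/2}$.

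The heart of the matter is then the following displacement estimate, which I expect to be the only non-routine ingredient: for any isometry $g$ of a compact Riemannian orbifold $Y$ there is $c>0$ with $\mathrm{dist}_Y(z,\mathrm{Fix}(g))\le c\,\mathrm{dist}_Y(z,g(z))$ for every $z$. Away from a fixed tubular neighbourhood of $\mathrm{Fix}(g)$ the left-hand side is at most $\mathrm{diam}(Y)$ while the continuous function $z\mapsto\mathrm{dist}_Y(z,g(z))$ is bounded below by a positive constant on the complementary compact set; near a fixed point $z_0$ one passes to an orbifold uniformizing chart and geodesic normal coordinates centred at $z_0$, in which $g$ becomes the orthogonal linear map $A=\mathrm{d}_{z_0}g$ and $\mathrm{Fix}(g)$ becomes the subspace $\ker(A-\mathrm{id})$, so the estimate reduces to the elementary bound $\lvert v-Av\rvert\ge c_A\lvert v-\Pi v\rvert$ with $\Pi$ the orthogonal projection onto $\ker(A-\mathrm{id})$ and $c_A=\min\{\lvert 1-\lambda\rvert:\lambda\in\mathrm{spec}(A)\setminus\{1\}\}$; finally $c_A$ stays bounded below uniformly over $\mathrm{Fix}(g)$ because the multiplicity of the eigenvalue $1$ equals the dimension of the (totally geodesic) fixed component through $z_0$ and is therefore locally constant, so no eigenvalue can migrate to $1$. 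Applying this to $g=\gamma_{\overline{X}_{\varpi}}$ at $z=\bar{y}'$ yields $\mathrm{dist}_{\overline{X}_{\varpi}}(\bar{y}',\mathrm{Fix}(\gamma_{\overline{X}_{\varpi}}))\le C_3 k^{\epsilon-1/2}$; since $p^{-1}_{\overline{X}_{\varpi}}(\mathrm{Fix}(\gamma_{\overline{X}_{\varpi}}))$ is $\mathbb{T}$-invariant, the quotient-metric identity gives $\mathrm{dist}_{X_{\varpi}}(\mathbb{T}\cdot y',p^{-1}_{\overline{X}_{\varpi}}(\mathrm{Fix}(\gamma_{\overline{X}_{\varpi}})))\le C_3 k^{\epsilon-1/2}$, and a final triangle inequality combining this with $\mathrm{dist}_X(y,y')\le 2C k^{\epsilon-1/2}$, $\mathrm{dist}_X(\mathbb{T}\cdot y,\mathbb{T}\cdot x)\le 2C k^{\epsilon-1/2}$ and $\mathrm{dist}_X\le\mathrm{dist}_{X_{\varpi}}$ produces the assertion with a suitable constant. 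The delicate points in the displacement estimate are the uniform lower bound on $c_A$ along the fixed locus and the orbifold bookkeeping (lifting $g$ to uniformizing charts, and the distance formula for Riemannian quotients); everything else is triangle inequalities together with standard facts about bi-Lipschitz maps on compacta, the comparability of intrinsic and extrinsic distance along a compact submanifold, and metric quotients by isometric actions.
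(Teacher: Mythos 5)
Your proposal is correct and follows essentially the same route as the paper: the paper likewise reduces to a nearest point $q\in X_{\varpi}$, pushes the bound $\mathfrak{m}(x,\gamma_X^{-1}(x))\le Ck^{\epsilon-1/2}$ down to $\mathrm{dist}_{\overline{X}_{\varpi}}(p(q),\gamma_{\overline{X}_{\varpi}}(p(q)))\le Ck^{\epsilon-1/2}$ (its inequality \eqref{eq:ineq1}), and then proves exactly your displacement estimate for the isometry $\gamma_{\overline{X}_{\varpi}}$ near its fixed locus via tubular neighbourhoods and the eigenvalue bound $\inf_i\lvert\lambda_i-1\rvert>0$ (its inequality \eqref{eq:ineq2}). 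Your treatment of the orbifold quotient metric and of the intrinsic--extrinsic distance comparison is, if anything, slightly more explicit than the paper's.
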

\begin{proof}[Proof of Lemma \ref{lem:TmFix}]
	If $x$ lies in the support of integral \eqref{eq:tracevarrho}, then we have that $\mathfrak{m}(x,\,\gamma_X^{-1}(x))< C\,k^{\epsilon-1/2}$. Let $q\in X_{\varpi}$ be such that \begin{equation}\label{eq:y}\mathrm{dist}_X(x,\,X_{\varpi})=\mathrm{dist}_X(x,\,q)\,.\end{equation}
	Since the Riemannian metric $g_X$ on $X$ is $\mathbb{T}$-invariant, then $\overline{X}_{\varpi}$ is a Riemannian orbifold and for each $z_1,\,z_2 \in X_{\varpi}$ we have
	\begin{align}\mathrm{dist}_X(\mathbb{T}\cdot z_1,\,\mathbb{T}\cdot z_2)= \mathrm{dist}_{\overline{X}_{\varpi}}( p_{X,\,\varpi}(z_1),\, p_{X,\,\varpi}(z_2))\,, \label{eq:ineqGp}\end{align}
	where $\mathrm{dist}_{\overline{X}_{\varpi}}$ denote the Riemannian distance on ${\overline{X}_{\varpi}}$.
	
	By hypothesis, we have
	\begin{align*} 
	\mathrm{dist}_X\left( x,\,p^{-1}_{X,\,\varpi}{(\mathrm{Fix}({\gamma}_{\overline{X}_{\varpi}}))} \right) &\leq \mathrm{dist}_X\left( x,\,q \right)+\mathrm{dist}_X\left( q,\,p^{-1}_{X,\,\varpi}({\mathrm{Fix}({\gamma}_{\overline{X}_{\varpi}})}) \right) \\
	&\leq C\,k^{\epsilon-1/2}+\mathrm{dist}_{\overline{X}_{\varpi}}\left( p_{X,\,\varpi}(q),\,\mathrm{Fix}({\gamma}_{\overline{X}_{\varpi}}) \right)\,. \notag
	\end{align*}
	
	Thus, the statement follows by the following lemma.
	\begin{lem} \label{lem:distquotient} Let $q$ as in \eqref{eq:y}, then we have
	\[\mathrm{dist}_{\overline{X}_{\varpi}}\left( p_{X,\,\varpi}(q),\,\mathrm{Fix}({\gamma}_{\overline{X}_{\varpi}}) \right)\leq C\,k^{\epsilon-1/2}\,.\]
	\end{lem}
\begin{proof}[Proof of Lemma \ref{lem:distquotient}] The Lemma is a consequence of the inequalities
	\begin{equation}\label{eq:ineq1}\mathrm{dist}_{\overline{X}_{\varpi}}\left( p_{X,\,\varpi}(q),\,{\gamma}_{\varpi}(p_{X,\,\varpi}(q)) \right) \leq  C\,k^{\epsilon-1/2} \, \end{equation}
 	and
	\begin{equation}\label{eq:ineq2}\mathrm{dist}_{{\overline{X}_{\varpi}}}\left( p_{X,\,\varpi}(q),\,{\gamma}_{\varpi}(p_{X,\,\varpi}(q)) \right) \geq  \mathrm{dist}_{{\overline{X}_{\varpi}}}\left( p_{X,\,\varpi}(q),\,\mathrm{Fix}({\gamma}_{\overline{X}_{\varpi}}) \right)\,.  \end{equation}
	
	First, let us prove inequality \eqref{eq:ineq1}. By \eqref{eq:ineqGp} and recalling that $\gamma_X\circ p_{X,\,\varpi}= p_{X,\,\varpi} \circ \gamma_{\varpi}$, we have
	\begin{align*} &\mathrm{dist}_{\overline{X}_{\varpi}}\left( p_{X,\,\varpi}(q),\,{\gamma}_{\varpi}(p_{X,\,\varpi}(q)) \right) = 
	\mathrm{dist}_{X}\left(\mathbb{T}\cdot q,\,\mathbb{T}\cdot{\gamma}_{X}(q) \right) \\
	&\leq \mathrm{dist}_{X}\left(\mathbb{T}\cdot q,\,\mathbb{T} \cdot x \right) + 	\mathrm{dist}_{X}\left(\mathbb{T}\cdot x,\,\mathbb{T}\cdot{\gamma}_{X}(y) \right) \\
	&\leq C\,k^{\epsilon-1/2} +\mathrm{dist}_{X}\left(\mathbb{T}\cdot x,\,\mathbb{T} \cdot \gamma_X^{-1}(x) \right) + 	\mathrm{dist}_{X}\left(\mathbb{T}\cdot \gamma_X^{-1}(x),\,\mathbb{T}\cdot{\gamma}_{X}(q) \right) \\
	&\leq C\,k^{\epsilon-1/2}\,.
	\end{align*}
	with abuse the constant $C$ may change lines by lines.
	
	Now we show that inequality \eqref{eq:ineq2} holds true. Let us denote with  $F_{\overline{X}_{\varpi},\,l}$, $l=1,\dots \ell$, the connected components of $\mathrm{Fix}({\gamma}_{\overline{X}_{\varpi}})$ and with $N_{\overline{X}_{\varpi},l}$ the corresponding normal bundles. For each $\epsilon >0$, we define
	$$N^{(\epsilon)}_{\overline{X}_{\varpi},l}:=\{(x_0,\,n)\in N_{\overline{X}_{\varpi},l}\,:\, \lVert n \rVert < \epsilon \}$$
	and $F^{(\epsilon)}_{\overline{X}_{\varpi},\,l}\subseteq X$ is a tubular neighborhood of $F_{\overline{X}_{\varpi},\,l}$ such that
	\[ \exp_{l}\,:\,N^{(\epsilon)}_{\overline{X}_{\varpi},l} \rightarrow \overline{X}_{\varpi} \]
	induces a diffeomorphism onto $F^{(\epsilon)}_{\overline{X}_{\varpi},\,l}$ and furthermore $\overline{F_{\overline{X}_{\varpi},\,l_1}^{(\epsilon)}}\cap \overline{F_{\overline{X}_{\varpi},\,l_2}^{(\epsilon)}} = \emptyset$ for all $l_1\neq l_2\in\{1\,\dots, \ell \}$. Since $F_{\overline{X}_{\varpi},\,l}$ is compact and $\exp_l(x_0,\,\cdot)$ is an isometric immersion, we have
	\begin{equation} \label{eq:distineq}
	2\,\lVert n-n' \rVert \geq \mathrm{dist}_{\overline{X}_{\varpi}}\left(\exp_l\left(x_0,\,n\right),\, \exp_l\left(x_0,\,n'\right)\right) \geq \frac{1}{2}\,\lVert n-n' \rVert 
	\end{equation}
	for every $x_0\in F_{\overline{X}_{\varpi},\,l}$ and $(x_0,\,n),\,(x_0,\,n')\in N_{\overline{X}_{\varpi},l}^{(\epsilon')}\cap N_{\overline{X}_{\varpi},l}^{(\epsilon)}$.

	If $k$ is sufficiently large, on the support of \eqref{eq:tracevarrho}, we have $$p_{\overline{X}_{\varpi}}(q)\in \bigcup_{l=1}^{\ell}F_{\overline{X}_{\varpi},\,l}^{(\epsilon)}$$ and hence we can write $p_{\overline{X}_{\varpi}}(q)=\exp_l(q_0,n)$ for some $(q_0,\,n)\in N^{(\epsilon)}_{\overline{X}_{\varpi},l}$. Hence, making use of \eqref{eq:distineq}, we finally have
	\begin{align*}
	\mathrm{dist}_{\overline{X}_{\varpi}}\left( p_{\overline{X}_{\varpi}}(q),{\gamma}_{X}(p_{\overline{X}_{\varpi}}(q)) \right)	
	& =\mathrm{dist}_{\overline{X}_{\varpi}}\left(\exp_{\bar{X}_{\varpi}}\left(q_0,n\right),\gamma_{\overline{X}_{\varpi}}\circ \exp_{\bar{X}_{\varpi}}\left(q_0,n\right)\right) \\
	&= \mathrm{dist}_{\overline{X}_{\varpi}}\left(\exp_{\overline{X}_{\varpi}}\left(q_0,n\right), \exp_{\overline{X}_{\varpi}}\circ \,\mathrm{d}_{q_0}\gamma_{\overline{X}_{\varpi}}\left(n\right)\right) \\
	&\geq \frac{1}{2}\,\lVert \mathrm{d}_{q_0}\gamma_{\varpi}\left(n\right)-n \rVert \\
	& \geq \frac{1}{2}\,\inf\{\,\lvert \lambda_{i}-1\rvert\, \}\,\lVert n \rVert \\
	& \geq \frac{1}{4}\,\inf\{\,\lvert \lambda_{i}-1\rvert\, \}\,\mathrm{dist}_X\left(q,\,\mathrm{Fix}(\gamma_{\overline{X}_{\varpi}}) \right)
	\end{align*}
	where $\lambda_i$'s are the eigenvalues of $\mathrm{d}_{y_0}\gamma_{\overline{X}_{\varpi}}$ and $\lambda_i\neq 1$ for all $i$. Thus inequality \eqref{eq:ineq2} holds true and Lemma \ref{lem:distquotient} is proved.
\end{proof}
	The proof of Lemma \ref{lem:TmFix} is thus concluded. 
\end{proof}

Heisenberg coordinates for the unit circle bundle $X$ are defined in \cite{sz}, we recall briefly the notations and we refer to \cite{sz} for the precise definitions. Given $U \subseteq M$, a \textit{preferred local chart} $\mathfrak{F}\,:\,U\times B_{2\,\mathrm{d}}(\epsilon) \rightarrow \mathbb{C}^d$ is determined by a preferred local coordinates
\[\mathfrak{f}^{(m)}=:\mathfrak{F}(m,\,\cdot)\,:\,B_{2\,\mathrm{d}}(\epsilon) \rightarrow \mathbb{C}^d\,, \]
which means that it trivializes the unitary structure. On the other hand, a \textit{preferred local frame} $e\,:\,U\rightarrow A$ at $m$ is a local frame satisfying $e^*(m)=x$ and $\langle e^*,\, e \rangle=1$. Explicitly, for any $y \in X$ there exist a neighborhood $X(U)=\pi^{-1}(U)$ of $y\in X$ (where $U\subseteq M$), $\epsilon >0$ and a smooth map
\[ \Psi\, :\, X(U) \times B_{2\,\mathrm{d}}(\epsilon) \times (−\pi, \pi) \rightarrow X\]
such that 
\begin{align} \label{eq:heisenberg}
\psi^{(x)}\, =:\,
\Psi(x,\,\cdot,\,\cdot)\, :\, B_{2\,\mathrm{d}}(\epsilon) \times (−\pi, \pi) \rightarrow X,\qquad (\mathbf{z},\,\vartheta)\mapsto e^{\imath\,\vartheta}\cdot \frac{e^*\left(\mathfrak{f}(\mathbf{z})\right)}{\lVert e^*\left(\mathfrak{f}(\mathbf{z})\right) \rVert}\end{align}
which is called a \textit{Heisenberg local chart} for $X$ centered at $x$. Following \cite{sz}, if $\mathbf{w}\in T_mM$ and $\lVert \mathbf{w}\rVert< \epsilon$, we set
\[\psi(\mathbf{w},\,0)=:x+\mathbf{w}\,. \]

First, let us modify the moving preferred local chart on $M$. Let us restrict to $M'$, the locus where the stabilizer of each point is constantly equal to a fixed finite subset $\mathbf{T}$. Hence for each $m\in M'$ we have $H_m = \mathbf{H}$ and denote with
\[ p_{\overline{M}_{\varpi}}'\,:\, M_{\varpi}' \rightarrow \overline{M}_{\varpi}\,. \]
the projection. For each sufficiently small open subset $\overline{U}\subseteq \overline{M}_{\varpi}'$ let us set section $\sigma\,:\,\overline{U} \rightarrow M'$ for the projection $p_{\overline{M}_{\varpi}}'$. Since the action is locally free on $M'$, we have the following. 

\begin{lem} \label{lem:musigma}
The map $\mu_{\sigma}\,:\, H\times \overline{U} \rightarrow M_{\varpi}'$ given by 
\[\mu_{\sigma}(h,\,m_0)=:\mu_h(\sigma(m_0)) \] 
is a $\lvert \mathbf{H} \rvert$-to-$1$ covering; its image is a $\mu$-invariant tubular neighborhood of the $H$-orbit of $m=\sigma(m_0)$. 
\end{lem}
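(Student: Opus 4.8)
The plan is to verify the three assertions of Lemma~\ref{lem:musigma} directly from the hypothesis that $\mathbb{T}$ acts on $M'$ with constant stabilizer $\mathbf{T}$, so that $H$ acts on $M'$ with constant stabilizer $\mathbf{H}=H\cap\mathbf{T}$, i.e. locally freely. First I would fix $m_0\in\overline U$, set $m=\sigma(m_0)\in M'$, and observe that $\mu_\sigma(h,m_0)=\mu_h(\sigma(m_0))$ is by construction a smooth map into $M_\varpi'$, since $\sigma$ takes values in $M'$ and the $H$-orbit through a point of $M_\varpi$ stays in $M_\varpi$ (recall $M_\varpi=\Phi^{-1}(\mathbb{R}_+\varpi)$ is $H$-invariant and $H\subseteq\mathbb{T}$). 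The image is then visibly the union of the $H$-orbits of the points $\sigma(m_0)$, $m_0\in\overline U$; since $\overline U$ is a slice-type neighborhood in $\overline{M}_\varpi'=M_\varpi'/H$ and $p'_{\overline{M}_\varpi}$ is the quotient map, this image is $\big(p'_{\overline{M}_\varpi}\big)^{-1}(\overline U)$, which is an open $\mu$-saturated (hence $\mathbb{T}$-invariant, after shrinking $\overline U$ so it sits inside the neighborhood given by the Slice Theorem recalled before Definition~\ref{def:cl}) tubular neighborhood of the orbit $H\cdot m$. This settles the second claim.

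Next I would check that $\mu_\sigma$ is a covering of degree $|\mathbf{H}|$. For the fiber over a point $m'=\mu_{\sigma}(h,m_0)=\mu_h(\sigma(m_0))$, note that $\mu_\sigma(h',m_0')=m'$ forces $p'_{\overline M_\varpi}(\sigma(m_0'))=p'_{\overline M_\varpi}(m')=m_0'\text{-image}$; since $\sigma$ is a section of $p'_{\overline M_\varpi}$ this pins down $m_0'=m_0$, and then $\mu_{h'}(\sigma(m_0))=\mu_h(\sigma(m_0))$ means $h^{-1}h'\in H_{\sigma(m_0)}=\mathbf{H}$, so the fiber is the coset $h\,\mathbf{H}$, of cardinality $|\mathbf{H}|$. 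To see it is a genuine covering map (and not merely finite-to-one) I would argue that $\mu_\sigma$ is a local diffeomorphism: its differential at $(h,m_0)$ is the sum of the differential of $\mu_h$ applied to $\mathrm{d}\sigma(T\overline U)$ and the infinitesimal $H$-action $\mathrm{val}_m(\mathfrak{h})$; by the description of the tangent space to the null leaf recalled in the introduction these two subspaces are complementary and together span $T M_\varpi'$ (here $\dim\overline U=2\mathrm{e}=\dim M_\varpi-\dim H$ uses local freeness), so $\mathrm{d}\mu_\sigma$ is an isomorphism. A local diffeomorphism which is proper onto an open subset of a manifold, with constant fiber cardinality $|\mathbf{H}|$, is a covering map of that degree; properness follows after shrinking $\overline U$ to have compact closure inside a slice chart, using that $H$ is compact.

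The main obstacle I expect is the bookkeeping around the orbifold points: $\overline M_\varpi$ is only an orbifold, $\sigma$ exists only locally and only after passing to $M'$, and one must make sure the various shrinkings of $\overline U$ are compatible — small enough that the Slice Theorem neighborhood, the tubular-neighborhood exponential chart, and the local-diffeomorphism estimate all hold simultaneously, while still covering the orbit $H\cdot m$. I would handle this by invoking the Slice Theorem statement already quoted before Definition~\ref{def:cl}, which gives an $H$-equivariant model $H\times_{H_m}N_m$ with $H_m=\mathbf{H}$ finite on $M'$; on that model $\mu_\sigma$ is literally the quotient map $H\times N_m\to H\times_{\mathbf{H}}N_m$ composed with the exponential diffeomorphism, which is manifestly an $|\mathbf{H}|$-fold covering onto its ($\mu$-invariant tubular-neighborhood) image. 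Pulling this identification back along $\sigma$ gives all three assertions at once, and the only thing left to note is that on $M'$ the local freeness guarantees $\mathbf{H}$ acts freely on the relevant directions so that no further orbifold singularity is introduced.
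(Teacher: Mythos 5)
Your proposal is correct and follows the same route the paper intends: the paper gives no proof of Lemma~\ref{lem:musigma}, asserting it as an immediate consequence of the local freeness of the $H$-action on $M'$ and the Slice Theorem model $H\times_{H_m}N_m$ recalled in the introduction, and your argument (fiber count via $h^{-1}h'\in H_{\sigma(m_0)}=\mathbf{H}$, local diffeomorphism from the splitting of $T M_\varpi'$ into $\mathrm{d}\mu_h\bigl(\mathrm{d}\sigma(T\overline U)\bigr)$ plus the orbit directions $\mathrm{val}_m(\ker\Phi(m))$, and the slice-model identification) is exactly the elaboration that is being left implicit. The only point to be careful about is that the image $\bigl(p'_{\overline M_\varpi}\bigr)^{-1}(\overline U)$ is naturally $H$-invariant rather than invariant under all of $\mathbb{T}$ — your hedged parenthetical reflects a looseness already present in the paper's wording ``$\mu$-invariant'' — but this does not affect the use made of the lemma.
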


Fix a point ${m}_0\in \overline{M}_{\varpi}$ and consider a small neighborhood $\overline{U}$ of $m_0$ in $\overline{M}_{\varpi}$. By the discussion above, we have a covering map 
\begin{equation}\mathfrak{F}_{\varpi}\,:\, H\times \overline{U}\times B_{2\,\mathrm{d}}(\epsilon)\rightarrow M\,, \label{eq:F}\end{equation}
such that for any $(h,\, {m}_0)\in H\times \overline{U}$, the partial map 
\[ {\mathfrak{f}}^{(h,\,m_0)}_{\varpi} =:{\mathfrak{F}}_{\varpi}(h,\,m_0,\,\,\cdot,\,\cdot) \,:\, B_{2\mathrm{d}}(\epsilon)\times (-\pi,\,\pi) \rightarrow M\] is a preferred local chart for $M$ centered at ${\mu}_h(\sigma(m_0))$, whose image is a small neighborhood $U$ of ${\mu}_h(\sigma(m_0))$ in $M$. Furthermore, we adopt the following notation 
\begin{align}\left({\mathfrak{f}}^{(h,\,m_0)}_{\varpi}\right)^{-1}=:\left(z_1^{(h,m_0)},\dots,z_{\mathrm{d}}^{(h,\,m_0)} \right)\,:\,U\rightarrow B_{2\,\mathrm{d}}(\epsilon) \label{eq:modifiedf} \end{align}
for corresponding local coordinates. We identify $\mathbb{R}^{2\mathrm{d}}=\mathbb{C}^{\mathrm{d}}$ in the standard manner, thus $z^{(h,\,m_0)}_j\, :\, U \rightarrow \mathbb{C}$ in equation \eqref{eq:modifiedf} is a smooth function; define
 \begin{align} \label{eq:ab}
 a^{(h,\,m_0)}_j =:\Re(z^{(h,\,m_0)}_j),\qquad b^{(h,\,m_0)}_j =:\Im(z^{(h,\,m_0)}_j) \,:\, U \rightarrow \mathbb{R}\,.\end{align} 
 
It is natural here to use an equivariant version of \eqref{eq:heisenberg} when $x\in X_{\varpi}$, as introduced in \cite{pao-loa}, Section $2.9$. Let us define
\[S=:\exp_{\mathbb{T}}\left(\imath\mathbb{R}\cdot \varpi\right)\,. \]
For each $x\in X_{\varpi}$, we shall modify the map $\psi^{(x)}$ in order to incorporate the circle action $\tilde{\nu}\,:\,S \times X \rightarrow X$ (the restriction of  $\tilde{\mu}$ to $S$) instead of the standard one. For each $m\in M_{\varpi}$ we have  $\langle\Phi(m),\,\xi\rangle \neq 0$; then by \eqref{eqn:contact vector field} we have $\xi_X(x) \neq 0$ for every $x \in X_{\varpi}$. Hence $\tilde{\nu}$ is locally
free on $X_{\varpi}$, and every $x \in X_{\varpi}$ has finite stabilizer $S_x^1\subseteq S^1$. For each $x\in X_{\varpi}$, by composing with $\tilde{\nu}$, we obtain a map $(\psi_{\varpi}^{(x)})'\,:\,S^1\times B_{2\,\mathrm{d}}(\epsilon)\rightarrow X$ by letting
\begin{equation}(\psi_{\varpi}^{(x)})'(e^{\imath\,\vartheta},\,\mathbf{v}) =:\tilde{\nu}_{e^{\imath\vartheta}}(x+\mathbf{v})\,. \label{eq:newcoor} \end{equation} 
Working in coordinates on $S$, this yields a map $\psi_{\varpi}^{(x)}\,:\,(-\pi,\,\pi)\times B_{2\,\mathrm{d}}(\epsilon)\rightarrow X$ setting
\begin{equation}\psi_{\varpi}^{(x)}({\vartheta},\,\mathbf{v}) =:\tilde{\nu}_{e^{\imath\vartheta}}(x+\mathbf{v}) \qquad (x\in X_{\varpi})\,. \label{eq:newcoor1} \end{equation} 

The proof of the following Lemma is obtained arguing in a similar way as in Section $2.9$ of \cite{pao-fpf}, see especially Lemmas $39$ and $40$.

\begin{lem} \label{lem:varpicov}
	The map $(\psi_{\varpi}^{(x)})'$ is an $\lvert S_x^1\rvert:1$-covering, its image is an invariant tubular neighborhood of the $S^1$-orbit through $x$. 
\begin{proof}
	Let us denote with $\vartheta = (\vartheta_1,\dots,\,\vartheta_g)$ be the collective angular coordinates on $\mathbb{T}$, and set $\xi_j = \partial/\partial\vartheta_j\vert_{\boldsymbol{0}}$. Choose a basis of $\mathfrak{t}$ such that $\xi_1=:\varpi/\lVert\varpi\rVert$ and $\{\xi_2\,\dots \xi_g\}$ is a basis for $\ker(\Phi(m))$ for each $m\in M_{\varpi}$. Also, let
	\[\Phi_j =: \langle \Phi, \xi_j\rangle\,:\, M \rightarrow \mathbb{R},\quad \vartheta \cdot \Phi =:\sum^g_{j=1} \vartheta_j\cdot \Phi_j,\quad\text{and}\quad \vartheta\cdot \xi_M =:\sum^g_{j=1} \vartheta_j\,\xi_{j\,M}\,.\]
	
	First, let us prove that $\psi_{\varpi}^{(x)}$ is locally a diffeomorphism when $x\in X_{\varpi}$. Let us denote $m=\pi(x)$ and notice that $ \Phi_1(m)\neq 0$. By Corollary $2.2$ in \cite{pao-IJM}, we have
	\begin{align*} \tilde{\nu}_{-\vartheta}(x+\mathbf{v})=&x+\left(\vartheta\cdot \Phi_1(m)+\omega_m(\vartheta\cdot\xi_{1,\,M}(m),\,\mathbf{v}),\,\mathbf{v}-\vartheta\cdot\xi_{1,\,M}(m) \right)\\
	&+O(\lVert(\vartheta,\,\mathbf{v})\rVert^2)\,. \end{align*}
	The Jacobian matrix at the origin of $(\psi^{(x)})^{-1}\circ \psi^{(x)}_{\varpi}$ is then
	\begin{equation} \label{eq:jac}
	\mathrm{jac}_{(0,\,\mathbf{0})}\left( (\psi^{(x)})^{-1}\circ \psi^{(x)}_{\varpi}\right)=\begin{pmatrix}
	\mathrm{id}_{2\mathrm{d}} & \mathbf{H}(m) \\
	\boldsymbol{0}^t & \Phi_1(m)
	\end{pmatrix}\,, \end{equation}
	where $\mathbf{H}(m)\in \mathbb{R}^{2\mathrm{d}}$ is the local coordinate expression of $\xi_{1,\,M}(m) \in T_mM$ (viewed as a column vector). Thus $\psi_{\varpi}$ is a local diffeomorphism at $(1, \boldsymbol{0}) \in \mathbf{T}^1 \times B_{2\,\mathrm{d}}(\boldsymbol{0}, \epsilon)$. Therefore, if $T\subseteq \mathbf{T}^1$ is a sufficiently small neighborhood of the identity, then for small $\epsilon >0$, the restriction of $\psi_{\varpi}$ to $\mathbf{T}^1 \times B_{2\mathrm{d}}(\boldsymbol{0}, \epsilon)$ is a diffeomorphism onto its image. 
	
	Since the stabilizer of $x$ is $S^1_x$, the inverse image $(\psi_{\varpi}^{(x)})'^{-1}(y)$ contains at least $\lvert S_x^1\rvert$ distinct elements. Arguing as in Lemma 39 of \cite{pao-loa} one can actually prove that it has exactly $\lvert S_x^1\rvert$ inverse images, sufficiently close to the orbit through $x$.
\end{proof}
\end{lem}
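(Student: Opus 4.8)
The plan is to reduce the statement to a local-diffeomorphism computation at the centre $x$, and then to promote it to a covering statement by combining properness with a slice-theorem count of the fibre.

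First I would show that the partial map $\psi_\varpi^{(x)}$ of \eqref{eq:newcoor1} is a local diffeomorphism at the origin. Choose the adapted basis $\{\xi_1,\dots,\xi_g\}$ of $\mathfrak t$ with $\xi_1=\varpi/\lVert\varpi\rVert$ and $\{\xi_2,\dots,\xi_g\}$ a basis of $\ker(\Phi(m))$ for every $m\in M_\varpi$, and put $\Phi_j=\langle\Phi,\xi_j\rangle$. Since $m=\pi(x)\in M_\varpi$, the ray condition $\Phi(m)\in\mathbb R_+\cdot\varpi$ forces $\Phi_1(m)=\lVert\Phi(m)\rVert\neq 0$. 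Substituting the first-order expansion of $\tilde\nu_{-\vartheta}(x+\mathbf v)$ provided by Corollary~$2.2$ of \cite{pao-IJM} into $(\psi^{(x)})^{-1}\circ\psi_\varpi^{(x)}$ and reading off the Jacobian at the origin, one gets a block upper-triangular matrix with diagonal blocks $\mathrm{id}_{2\mathrm d}$ and the scalar $\Phi_1(m)$, hence invertible. So $\psi_\varpi^{(x)}$ is a local diffeomorphism at the origin; translating by $\tilde\nu$, the map $(\psi_\varpi^{(x)})'$ is a local diffeomorphism on all of $S^1\times B_{2\mathrm d}(\epsilon)$, and, after shrinking $\epsilon$ and restricting the $S^1$-variable to a small neighbourhood $T$ of the identity, the restriction of $(\psi_\varpi^{(x)})'$ to $T\times B_{2\mathrm d}(\epsilon)$ is a diffeomorphism onto its image.

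Next I would count the fibre. Because $\langle\Phi(m),\xi\rangle\neq 0$ on $M_\varpi$, equation \eqref{eqn:contact vector field} gives $\xi_X(x)\neq 0$ for $x\in X_\varpi$, so $\tilde\nu$ is locally free on $X_\varpi$ and $S^1_x$ is finite. Each $s\in S^1_x$ yields a preimage of a given $y$ in the image: since $\tilde\nu_{s^{-1}}$ fixes $x$ and carries a neighbourhood of $x$ into a neighbourhood of $x$, one may write $\tilde\nu_{s^{-1}}(x+\mathbf v)=x+\mathbf v_s$ with $\mathbf v_s$ small, so $y=\tilde\nu_{e^{\imath\vartheta}}(x+\mathbf v)=\tilde\nu_{e^{\imath\vartheta}s}(x+\mathbf v_s)=(\psi_\varpi^{(x)})'(e^{\imath\vartheta}s,\mathbf v_s)$; distinct $s$'s give distinct preimages because their $S^1$-components $e^{\imath\vartheta}s$ differ, whence there are at least $\lvert S^1_x\rvert$ of them. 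Conversely, if $\tilde\nu_{e^{\imath\vartheta}}(x+\mathbf v)=\tilde\nu_{e^{\imath\vartheta'}}(x+\mathbf v')$ with $\lVert\mathbf v\rVert,\lVert\mathbf v'\rVert<\epsilon$, then $\tilde\nu_{e^{\imath(\vartheta-\vartheta')}}$ sends $x+\mathbf v$, close to $x$, to $x+\mathbf v'$, close to $x$; shrinking $\epsilon$ and using properness of the $S^1$-action near the orbit $S^1\cdot x$ (equivalently the slice theorem) forces $e^{\imath(\vartheta-\vartheta')}$ into $T\cdot s$ for a unique $s\in S^1_x$, and then the local injectivity from the first step pins the preimage down inside that branch. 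This reproduces Lemma~$39$ of \cite{pao-loa} and shows the fibre has exactly $\lvert S^1_x\rvert$ points.

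To conclude, $(\psi_\varpi^{(x)})'$ is a local diffeomorphism with finite fibres of constant cardinality $\lvert S^1_x\rvert$, and it is proper (take the source to be $S^1\times\overline{B_{2\mathrm d}(\epsilon/2)}$, which is compact); a proper local homeomorphism onto its image is a covering map, which gives the asserted $\lvert S^1_x\rvert:1$ covering. Its image equals $\tilde\nu_{S^1}\bigl(\{x+\mathbf v:\lVert\mathbf v\rVert<\epsilon\}\bigr)$, which is $S^1$-invariant and contains $S^1\cdot x$; since $\{x+\mathbf v\}$ is transverse to the orbit direction at $x$ and $\epsilon$ is small, the slice theorem identifies the image with a neighbourhood of the zero section of the normal bundle of $S^1\cdot x$, i.e. an invariant tubular neighbourhood. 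I expect the main obstacle to be the second half of the fibre count (the bound ``at most $\lvert S^1_x\rvert$''): it is not formal and rests on properness of the torus action in a neighbourhood of $X_\varpi$, which is the content imported from Lemma~$39$ of \cite{pao-loa}.
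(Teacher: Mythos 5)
Your proposal is correct and follows essentially the same route as the paper: the same Jacobian computation via Corollary~2.2 of \cite{pao-IJM} yielding the block upper-triangular matrix with diagonal entries $\mathrm{id}_{2\mathrm{d}}$ and $\Phi_1(m)\neq 0$, and the same appeal to Lemma~39 of \cite{pao-loa} for the exact fibre count. You merely spell out the properness/slice-theorem endgame that the paper leaves implicit.
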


Recall that, for each $l=1,\,\dots\,\ell$, $F_{\overline{X}_{\varpi},\,l}$ is the connected component of $\mathrm{Fix}(\gamma_{\,\overline{X}_{\varpi}})$ and $\widetilde{F}_{\overline{X}_{\varpi},\,l}$ is the pullback of the locus $F_{\overline{X}_{\varpi},\,l}$ on $X_{\varpi}$. Given $x\in \widetilde{F}_{\overline{X}_{\varpi},\,l}$ with $\pi(x)=m$, then for each $\mathbf{v}\in T_mM_{\varpi}$ the map $\psi_{\varpi}^{(x)}({\vartheta},\,\mathbf{v}) $ defines local coordinates at $x$ in an open neighborhood $U$ in $X_{\varpi}$. 

Let $F_{{\overline{M}_{\varpi}},\,l}$ and $N_{{\overline{M}_{\varpi}},\,l}$ be as in the Introduction. We denote with $N_{M,\,l}$ the normal bundle in $M$ to $p_{{\overline{M}_{\varpi}}}^*(F_{{\overline{M}_{\varpi}},\,l})$. By the $\mathbb{T}$-invariance of $M_{\varpi}$ and $F_{M,\,l}$, we can modify the local coordinates of equations \eqref{eq:modifiedf} and \eqref{eq:ab}. Hence for every $h\in H$ and $m_0 \in \overline{U}$ we obtain preferred local coordinates centered in $\mu_{\sigma}(h,\,m_0)$ such that the following two conditions are satisfied
\begin{itemize}	\item[i)] $M_{\varpi}\cap U=\left\{b^{(h,\,m_0)}_{\mathrm{d-g+2}}=\dots=b^{(h,\,m_0)}_{\mathrm{d}}=0 \right\}$;	\item[ii)] $F_{M,\,l} \cap U=\left\{z^{(h,\,m_0)}_{\mathrm{d}_l+1}=\dots=z^{(h,\,m_0)}_{\mathrm{d}-g+1}=b^{(h,\,m_0)}_{\mathrm{d-g+2}}=\dots=b^{(h,\,m_0)}_{\mathrm{d}}=0 \right\}$.\end{itemize}

Along the line of \cite{pao-IJM}, Section \S $2.2$, we introduce a decomposition of $T_{m}M$ when $m\in M_{\varpi}$. Under the transversality assumption, we have
\begin{equation} T_mM= T^{\mathrm{t}}_mM\oplus T^{\mathrm{v}}_mM \oplus T^{\mathrm{h}}_mM\,, \label{eq:decomposition} \end{equation}
where
\[T^{\mathrm{t}}_m M:=J_m\,\varpi^{\perp_{\mathfrak{t}}}_M(m)\,, \qquad T^{\mathrm{v}}_mM:=\varpi^{\perp_{\mathfrak{t}}}_M(m)\,\]
and
\[T^{\mathrm{h}}_mM := \left(T^{\mathrm{v}}_mM \oplus T^{\mathrm{t}}_mM \right)^{\perp_{g}}\,. \]
We can use this decomposition to write $\mathbf{v}$ as
\[\mathbf{v}= \mathbf{v}_{\mathrm{h},\,\mathrm{nor}}+\mathbf{v}_{\mathrm{t}} \]
where $\mathbf{v}_{\mathrm{h},\,\mathrm{nor}}\in T^{\mathrm{h}}_mM\cap {N}_{M,\,l}(m)$ and $\mathbf{v}_{\mathrm{t}}\in T^{\mathrm{t}}_mM$. In such a way that, for each $j=0\,\dots,\,\mathrm{g-2}$ and $i=0\,\dots,\,\mathrm{c}_l-1$, we can write
$$b^{(h,\,m_0)}_{\mathrm{d}-j}=\mathbf{v}_{\mathrm{t}}  \qquad\text{ and }\qquad z^{(h,\,m_0)}_{\mathrm{d}-g+1-i}=\mathbf{v}_{\mathrm{h},\,\mathrm{nor}}\,.$$

\begin{lem} \label{lem:changeofcoor} Suppose that $f$ is a continuous function on $X$. Consider a small equivariant tubular neighborhood $\mathcal{T}(F_{X,\,l})$ of $p^*_{\overline{X}_{\varpi}}(F_{\overline{X}_{\varpi},\,l})$ in $X$, we have
\begin{align*}\int_{\mathcal{T}(F_{X,\,l})} f \,\mathrm{dV}_X =& \int_{-\pi}^{\pi} \lvert\mathrm{d}\vartheta\rvert\, \int_H\,\mathrm{dV}_H(h)\int_{B_{2\mathrm{d}_l}(\epsilon)}\mathrm{dV}_{F_{X,\,l}}(x_0)\, \int_{B_{2\mathrm{c}_l+g-1}(\epsilon)}\mathrm{d}\mathcal{L}(\mathbf{v})\\
&  \frac{1}{2\,\pi\,\lvert \mathbf{T}\rvert}\cdot\left[ f\left(\psi_{\varpi}^{(x_0,h)}(\vartheta,\,\mathbf{v})\right)\cdot \mathcal{D}(m) \cdot (\lVert\Phi(m)\rVert+A(\mathbf{v}))  \right] \end{align*}
where $A(\mathbf{v}) = O(\mathbf{v})$ and
\[\mathcal{D}(m)=: \sqrt{D(m)} \qquad (m \in M_{\varpi})\,; \]
where $D(m)$ is the change of basis matrix between the two Euclidean structures on $\ker(\Phi(m))$ induced from $\mathfrak{t}$ and $T_mM$, respectively. 

\end{lem}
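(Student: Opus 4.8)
The assertion is an equality of measures, so the plan is to (i) parametrize a neighbourhood of $F_{X,\,l}$ by the map $(\vartheta,\,h,\,x_0,\,\mathbf{v})\mapsto\psi_{\varpi}^{(x_0,h)}(\vartheta,\,\mathbf{v})$, (ii) compute the Jacobian of this parametrization relative to $\mathrm{dV}_X$, and (iii) keep track of the covering multiplicities supplied by Lemmas \ref{lem:musigma} and \ref{lem:varpicov}. Since the base $F_{\overline{M}_{\varpi},\,l}$ is compact we may, after a partition of unity, assume that $f$ is supported in the image of a single chart \eqref{eq:F}; fixing such a chart, the preferred coordinates $(z_j^{(h,m_0)})$ satisfying conditions i)--ii), together with the splitting \eqref{eq:decomposition}, decompose the $2\mathrm{d}$ real coordinates on $M$ into $2\mathrm{d}_l$ directions tangent to $F_{\overline{M}_{\varpi},\,l}$ (the variable $x_0$), $\mathrm{g}-1$ directions tangent to the $H$-orbit (the variable $h$), the $2\mathrm{c}_l$ directions $\mathbf{v}_{\mathrm{h},\,\mathrm{nor}}\in T_m^{\mathrm{h}}M\cap N_{M,\,l}(m)$, and the $\mathrm{g}-1$ directions $\mathbf{v}_{\mathrm{t}}\in T_m^{\mathrm{t}}M$ transverse to $M_{\varpi}$; adding the single $\vartheta$-direction along $S$ accounts for all $2\mathrm{d}+1$ dimensions of $X$.

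The core of the argument is the Jacobian, which I would organize into three independent contributions. First, passing from the standard Heisenberg chart $\psi^{(x)}$ to the equivariant chart $\psi_{\varpi}^{(x)}$ of \eqref{eq:newcoor1} produces, at the centre $\mathbf{v}=\boldsymbol{0}$, the factor $\lvert\Phi_1(m)\rvert=\lVert\Phi(m)\rVert$: this is the determinant of the Jacobian matrix \eqref{eq:jac} (Corollary $2.2$ of \cite{pao-IJM}), the off-diagonal column $\mathbf{H}(m)$ being a volume-preserving shear. Carrying the same expansion to $\mathbf{v}\neq\boldsymbol{0}$ replaces $\lVert\Phi(m)\rVert$ by $\lVert\Phi(m)\rVert+A(\mathbf{v})$ with $A(\mathbf{v})=O(\mathbf{v})$. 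Second, expressing the $\mathrm{g}-1$ Lebesgue directions along the $H$-orbit in terms of the Haar measure $\mathrm{dV}_H$, normalized through $\langle\cdot,\,\cdot\rangle_{\mathfrak{t}}$ rather than through $T_mM$, introduces precisely the factor $\mathcal{D}(m)=\sqrt{D(m)}$, because $\mathrm{val}_m\colon\ker(\Phi(m))\to T_m^{\mathrm{v}}M$ is a linear isomorphism (the action being locally free on $M'$, Lemma \ref{lem:musigma}) and $D(m)$ is by definition the Gram determinant comparing these two Euclidean structures. Third, the surviving $x_0$-directions along $F_{X,\,l}$ and the $\mathbf{v}$-directions normal to it contribute---once $\mathrm{dV}_{F_{X,\,l}}$ is taken to be the Riemannian volume, $\mathrm{d}\mathcal{L}$ the Lebesgue measure, and $\mathrm{dV}_X$ is written in the Heisenberg normalization of \cite{sz} with density $1$ at the centre---only a factor equal to $1$ at $\mathbf{v}=\boldsymbol{0}$, which is again absorbed into $A(\mathbf{v})$. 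Since $\Phi$ is $\mathbb{T}$-invariant and $\mathcal{D}$ is invariant along the $S$- and $H$-orbits, both $\lVert\Phi(m)\rVert$ and $\mathcal{D}(m)$ depend only on $(x_0,\,h)$, consistently with the statement.

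It remains to assemble the multiplicity. By Lemma \ref{lem:musigma} the map $\mu_{\sigma}$ is a $\lvert\mathbf{H}\rvert$-to-$1$ covering, by Lemma \ref{lem:varpicov} the map $(\psi_{\varpi}^{(x)})'$ incorporating the $S$-action is an $\lvert S_x^1\rvert$-to-$1$ covering, and the normal $\mathbf{v}$-directions embed; replacing $e^{\imath\vartheta}\in S^1$ by $\vartheta\in(-\pi,\,\pi)$ contributes the normalization $\frac{1}{2\pi}$. Combining these yields the prefactor $\frac{1}{2\pi\lvert\mathbf{T}\rvert}$. The delicate point---and what I expect to be the main obstacle---is exactly this last step: one must verify that the product $\lvert\mathbf{H}\rvert\,\lvert S_x^1\rvert$ of the two covering degrees equals $\lvert\mathbf{T}\rvert$, which rests on the relation between the generic stabilizer $\mathbf{T}$ of the $\mathbb{T}$-action on $M$ and the partially resolved stabilizer $S_x^1$ of the lifted $S$-action on the circle bundle $X$ (handled as in \cite{pao-IJM} and \cite{pao-loa}), and one must also confirm that the $\mathbf{v}$-dependent corrections in the Jacobian genuinely vanish at $\mathbf{v}=\boldsymbol{0}$ rather than being merely bounded. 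Modulo these points, what is left is a routine, if lengthy, change-of-variables computation, keeping straight the three Euclidean structures in play: the one on $\mathfrak{t}$, the Riemannian one on $X$, and the osculating one of the Heisenberg chart.
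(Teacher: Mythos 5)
Your proposal is correct and follows essentially the same route as the paper: the factor $\lVert\Phi(m)\rVert+A(\mathbf{v})$ comes from the Jacobian \eqref{eq:jac} of the passage to the equivariant Heisenberg chart, the factor $\mathcal{D}(m)$ from comparing the Haar and induced Riemannian structures on the $H$-orbit via the Gram determinant, and the prefactor from the covering degrees of Lemmas \ref{lem:musigma} and \ref{lem:varpicov}. The stabilizer bookkeeping you flag as the delicate point is exactly what the paper disposes of by citing Lemma $3.9$ of \cite{DP}.
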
 \begin{proof}[Proof of Lemma \ref{lem:changeofcoor}]

As a consequence of \eqref{eq:jac} and Lemma \ref{lem:varpicov}, we have 
\begin{align} \label{eq:pullbackvolume}
&(\psi_{\varpi}^{(x)})^*(\mathrm{dV}_X) = (\psi^{(x)}\circ (\psi^{(x)})^{-1}\circ \psi_{\varpi}^{(x)})^*(\mathrm{dV}_X) \\
&\quad=((\psi^{(x)})^{-1}\circ \psi_{\varpi}^{(x)})^*((\psi^{(x)})^*\mathrm{dV}_X) \notag \\
&\quad= ((\psi^{(x)})^{-1}\circ \psi_{\varpi}^{(x)})^*
( \mathcal{V}(\vartheta, \mathbf{v}) \lvert\mathrm{d}\vartheta\rvert \mathrm{d}\mathcal{L}(\mathbf{v})) \notag \\
&\quad=\left(\mathcal{V}\circ \left(((\psi^{(x)})^{-1}\circ \psi_{\varpi}^{(x)})\right)\right)\cdot \rvert\det\left(\mathrm{jac}\left(((\psi^{(x)})^{-1}\circ \psi_{\varpi}^{(x)})\right)\right)\rvert
\, \lvert\mathrm{d}\vartheta\rvert\mathrm{d}\mathcal{L}(\mathbf{v})\,.\notag
\end{align}
Inserting the determinant of \eqref{eq:jac} in the last line of \eqref{eq:pullbackvolume}, we get at $(0,\, \boldsymbol{0})$
\begin{equation} \label{eq:pullbackat00}
(\psi_{\varpi}^{(x)})^*(\mathrm{dV}_X)(0,\, \boldsymbol{0}) =
\frac{1}{2\pi}
\Phi_1(m)\, |\mathrm{d}\vartheta| \mathrm{d}\mathcal{L}(\mathbf{v}).\end{equation}
To prove \eqref{eq:pullbackat00} at $(\vartheta_0, 0)$, we replace $\vartheta \simeq \vartheta_0$ by $\vartheta + \vartheta_0$ with $\vartheta \simeq 0$ and note that $\nu_{\vartheta + \vartheta_0}(x + \mathbf{v}) = \nu_{\vartheta}\left(\nu_{\vartheta_0}(x + \mathbf{v})\right)$. Since
\[\psi^{(x)}({\vartheta_0},\,\mathbf{v})=e^{\imath\,\theta}\cdot \left(\nu_{\vartheta_0}(x+\mathbf{v}) \right) \]
is a system of Heisenberg Local Coordinates centered at $\nu_{\vartheta_0}(x)$, one can argue as in the previous case. Also notice that for each $m\in M_{\varpi}$ we have $\Phi(m)=(\lVert\Phi(m)\rVert\varpi)/\lVert\varpi\rVert$ and thus $\Phi_1(m)=\lVert\Phi(m)\rVert$.

The underlying preferred coordinates on $M$ can also be modified by including the action of $H$ as in \eqref{eq:F}. In view of Lemma \ref{lem:musigma} the map $\mu_{\sigma}$ is a $\lvert \mathbf{H} \rvert$-to-1 covering of the orbit $H\cdot m$. Since the map $h\mapsto h\cdot m$ is a covering map of the $H$-orbit, we have 
\[V_{\mathrm{eff}}^H(m)\cdot \lvert H_m\rvert= \int_{-\pi}^{\pi} \mathrm{d}\vartheta_1\cdots \int_{-{\pi}}^{\pi}\mathrm{d}\vartheta_{g-1}\left[\sqrt{D(m)} \right]= ({2\,\pi})^{\mathrm{g}-1}\,\sqrt{D(m)}\,, \]
where $V_{\mathrm{eff}}^H(m)$ is the volume of the orbit $H\cdot m$ and $D(m)$ is defined in the statement of Lemma. By invoking Lemma $3.9$ of \cite{DP} we can conclude.
\end{proof}

Let us now return to \eqref{eq:tracevarrho}. For every $l=1,\,\dots, \ell$, let $\{U_{l\,h} \}_h$ be a finite open cover of $p^*_{\overline{X}_{\varpi}}(F_{\overline{X}_{\varpi},\,l})$ and let
\begin{equation} \label{eq:tauij}
\{\tau_{l\,h}\,: U_{l\,h} \rightarrow \mathbb{R} \,\}_{l,\,h}
\end{equation}
be an equivariant smooth partition of unity subordinate to this cover. By making use of Lemma \ref{lem:changeofcoor} and inserting the relation \eqref{eq:tauij} in \eqref{eq:tracevarrho} we obtain the following oscillatory integrals
\[\mathrm{trace}\left(\Psi_{k\varpi} \right) \sim   \sum_{l\,h}\mathrm{trace}\left(\Psi_{k\varpi} \right)_{l\,h} \] where 
\begin{align}& \label{eq:tracelj} \mathrm{trace}\left(\Psi_{k\varpi} \right)_{l\,h} =\int_{\mathbb{R}^{\mathrm{c}_l}}\mathrm{d}\mathcal{L}(\mathbf{v}_{\mathrm{h},\,\mathrm{nor}})\,\int_{\mathbb{R}^{\mathrm{g}-1}}\mathrm{d}\mathcal{L}(\mathbf{v}_{\mathrm{t}}) \int_{\mathbb{T}}\,\mathrm{dV}_{\mathbb{T}}(t)\,\int_{U_{l\,h}}\mathrm{dV}_{F_{\overline{X}_{\varpi},\,l}}(x_0) \\
&\biggl[\varrho_{\mathrm{h},\mathrm{nor}}\left(k^{1/2-\epsilon}\,\mathbf{v}_{\mathrm{h},\mathrm{nor}}\right)\,\varrho_{\mathrm{t}}\left(k^{1/2-\epsilon}\,\mathbf{v}_{\mathrm{t}}\right)\,\tau_{l\,h}(x_0)\,f(\tilde{\mu}_t(\sigma(x_0)+\mathbf{v}_{\mathrm{h},\,\mathrm{nor}}+\mathbf{v}_{\mathrm{t}}))\, \notag \\ &\quad \cdot   \frac{1}{\lvert \mathbf{T}\rvert}\cdot \mathcal{D}(m)\cdot (\lVert\Phi(m)\rVert+A(\mathbf{v}))  \notag \\ & \quad \cdot \Pi_{k\varpi} \left(\gamma^{-1}_X(\tilde{\mu}_h(\sigma(x_0))+\mathbf{v}_{\mathrm{h},\,\mathrm{nor}}+\mathbf{v}_{\mathrm{t}}),\,\tilde{\mu}_h(\sigma(x_0))+\mathbf{v}_{\mathrm{h},\,\mathrm{nor}}+\mathbf{v}_{\mathrm{t}}\right)\biggr]\,; \notag\end{align} 
noticed that in \eqref{eq:tracelj} we use that the action of $\mathbb{T}$ commutes with $\gamma_X$ and
\begin{equation} \label{eq:Pikvarpich}
\Pi_{k\varpi}(x,\,t\cdot y)=\overline{\chi_{k\varpi}({t})}\cdot \Pi_{k\varpi}(x,\,y)\,.\end{equation}

Now we shall use the asymptotic expansion for scaling limits of equivariant Szeg\"o kernels obtained in \cite{pao-IJM}. Theorem $4$ in \cite{pao-IJM} concerns off-locus asymptotics expansions along directions in the normal bundle $T^{\mathrm{t}}M$. One can generalize them for arbitrary displacements as follow.\footnote{These expansions come from unpunished notes of Prof. R. Paoletti. Although in a more general setting they are also contained in \cite{cm}.} For each $x\in X_{\varpi}$ and $v_i\in T_xX$, $i=1,\,2$ and $v_i=(\theta_i,\mathbf{v}_i)$, we have
\begin{align} \label{eq:piknu}
&\Pi_{k\varpi}\left(x+\frac{v_{1}}{\sqrt{k}},\,x+\frac{v_2}{\sqrt{k}} \right)  \\
&\qquad\sim  \frac{e^{\imath\,\sqrt{k}\,\lambda_{\varpi}(m)\,(\theta_1-\theta_2)}}{(\sqrt{2}\,\pi)^{g-1}\,\mathcal{D}(m)}\,\left(\lVert \varpi \rVert\cdot\frac{k}{\pi}  \right)^{\mathrm{d}+(1-\mathrm{g})/2}\cdot\left( \frac{1}{\lVert \Phi(m) \rVert}  \right)^{\mathrm{d}+1+(1-\mathrm{g})/2} \notag\\
&\qquad\,\cdot  \left(\sum_{t\in T_m}\chi_{\varpi}(t)^k\,e^{\lambda_{\varpi}(m)\,E_m\left(\mathrm{d}_m\tilde{\mu}_{t^{-1}}(\mathbf{v}),\mathbf{w}\right)}\right) \cdot \left(1+\sum_{j\geq 1}R_j(m,\,v_1,\,v_2)\,k^{-j/2} \right)\,, \notag
\end{align}
where $\lambda_{\varpi}(m)=:\lVert \varpi \rVert / \lVert \Phi(m) \rVert$, $R_j$ is polynomial in the $v_i$'s,
\begin{align} \label{eq:E}
E(v_1,\,v_2):=\, & \imath\,\omega_m\left(\mathbf{A}(v_1,\,v_2),,\mathbf{v}_{1\,\mathrm{h}} \right) +\psi_2\left(\mathbf{v}_{1\,\mathrm{h}}-\mathbf{A}(v_1,\,v_2)_{\mathrm{h}},\,\mathbf{v}_{2\,\mathrm{h}} \right) \\
& -\lVert \mathbf{v}_{1\,\mathrm{t}} \rVert^2-\lVert \mathbf{v}_{2\,\mathrm{t}}\rVert^2 \notag \\
& \imath\,\omega_m\left(\mathbf{v}_{1\,\mathrm{v}},\,\mathbf{v}_{1\,\mathrm{t}}\right) -\imath\,\omega_m\left(\mathbf{v}_{2\,\mathrm{v}},\,\mathbf{v}_{2\,\mathrm{t}}\right)\,, \notag
\end{align}
and 
\begin{equation} \label{eq:A}\mathbf{A}(v_1,\,v_2)=:\frac{\theta_2-\theta_1}{\lVert \Phi(m) \rVert}\,\xi_{1,\,M}\,. \end{equation}

We need to adapt the fourth line of equation \eqref{eq:tracelj} to make use of the expansion \eqref{eq:piknu}. Pick $x_0\in F_{X,\,l}$, for each $j=1,\,\dots,\,\mathbf{T}$ there exists $\kappa_j\in \mathbb{T}$ such that 
$$\gamma_X^{-1}(\sigma(x_0))=\tilde{\mu}_{\kappa_j}(\sigma(x_0))\,.$$ 
Fix $\kappa=\kappa_j$, of course $\kappa_j=\kappa\cdot t_j$ with $t_j\in \mathbf{T}$. Let us denote with $x(\kappa,\,h)=\tilde{\mu}_{\kappa\,h}(\sigma(x_0))$, $x(h)=\tilde{\mu}_{h}(\sigma(x_0))$ and similarly $m(h)={\mu}_{h}(\sigma(m_0))$. Furthermore, since the map $\gamma_X\,:\,X\rightarrow X$ is a Riemannian isometry, we deduce that $$\gamma^{-1}_X\circ \exp_x=\exp_{\gamma^{-1}_X(x)}\circ\, \mathrm{d}_x\gamma^{-1}_X\,.$$
Hence, for all $\mathbf{v}\in T_{\mu_h(\sigma(m_0))} M$ we have
\begin{equation} \label{eq: gammaMcoordinates} 
\gamma^{-1}_X\left(x(h)+\mathbf{v}\right)=x(\kappa,\,h)+\mathrm{d}_{x(h)}\gamma_X^{-1}(\mathbf{v})\,. \end{equation}
Similarly, we should write
\begin{equation} \label{eq: Picoordinatesright} 
x(h)+\mathbf{v}=\tilde{\mu}_{\kappa^{-1}}\left(x(\kappa,\,h)+\mathrm{d}_{x(h)}\tilde{\mu}_{\kappa}(\mathbf{v})\right)\,.  \end{equation}
Thus, inserting equations \eqref{eq: gammaMcoordinates}  and \eqref{eq: Picoordinatesright} in the last line of \eqref{eq:tracelj}, we obtain
\begin{align*}
&\Pi_{k\varpi} \left(\gamma^{-1}_X(x(h)+\mathbf{v}),\,x(h)+\mathbf{v}\right) \\
&\quad =\Pi_{k\varpi} \left(x(\kappa,\,h)+\mathrm{d}_{x(h)}\gamma_X^{-1}(\mathbf{v}),\,\tilde{\mu}_{\kappa^{-1}}\left(x(\kappa,\,h)+\mathrm{d}_{x(h)}\tilde{\mu}_{\kappa}(\mathbf{v})\right)\right)
\\
&\quad =\overline{\chi_{k\varpi}({\kappa})}\cdot \Pi_{k\varpi} \left(x(\kappa,\,h)+\mathrm{d}_{x(h)}\gamma_X^{-1}(\mathbf{v}),\,x(\kappa,\,h)+\mathrm{d}_{x(h)}\tilde{\mu}_{\kappa}(\mathbf{v})\right)
\end{align*}
where we applied \eqref{eq: gammaMcoordinates}, \eqref{eq: Picoordinatesright} and \eqref{eq:Pikvarpich}.

Let us set $\mathbf{v}=\mathbf{v}_{\mathrm{h},\,\mathrm{nor}}+\mathbf{v}_{\mathrm{t}}$. The map $\mathrm{d}\gamma_X^{-1}$ preserves the norm and decomposition defined in \eqref{eq:decomposition}, thus we have
\begin{equation} \label{eq:gammapreservesdecomp} \mathrm{d}_{x(h)}\gamma_X^{-1}(\mathbf{v}_{\mathrm{h},\,\mathrm{nor}}+\mathbf{v}_{\mathrm{t}})= \mathrm{d}_{x(h)}\gamma_X^{-1}(\mathbf{v}_{\mathrm{h},\,\mathrm{nor}})+ \mathrm{d}_{x(h)}\gamma_X^{-1}(\mathbf{v}_{\mathrm{t}}) \end{equation}
and where the former (respectively, the latter) term in the right hand side lies in $T_{m(h)}^{\mathrm{h}}M\cap N_{M,\,l}$ (respectively in $T_{m(h)}^{\mathrm{t}}M$). The analogue of \eqref{eq:gammapreservesdecomp} for $\mathrm{d}_{x(h)}\tilde{\mu}_{\kappa}$ holds true and furthermore $\mathrm{d}_{x(h)}\gamma_X^{-1}$ and $\mathrm{d}_{x(h)}\tilde{\mu}_{\kappa}$ preserve the norm. Thus, the expression \eqref{eq:E} has the following form
\begin{align*}
E(\mathrm{d}_{x(h)}\gamma_X^{-1}(\mathbf{v}),\,\mathrm{d}_{x(h)}\tilde{\mu}_{\kappa}(\mathbf{v})):=\, & \psi_2\left(\mathrm{d}_{x(h)}\gamma_X^{-1}(\mathbf{v}_{\mathrm{h},\,\mathrm{nor}}),\,\mathrm{d}_{x(h)}\tilde{\mu}_{\kappa}(\mathbf{v}_{\mathrm{h},\,\mathrm{nor}}) \right) \\
&-2\,\lVert \mathbf{v}_{\mathrm{t}} \rVert^2.  
\end{align*}

We rescale $\mathbf{v}$ by a factor $k^{-1/2}$ and integrate the rescaled variables over a ball of radius $\sim k^{\epsilon}$ in \eqref{eq:tracelj}, this gives a factor $k^{-c_l-(g-1)/2}$. Using Taylor expansions, we obtain 
\begin{align} \label{eq:f}
f\left(\tilde{\nu}_{e^{\imath\,\theta}}\left(x(h)+\frac{\mathbf{v}_{\mathrm{h},\,\mathrm{nor}}}{\sqrt{k}}+\frac{\mathbf{v}_{\mathrm{t}}}{\sqrt{k}}\right)\right)&= f(x(g))+\sum_{j\geq 1} k^{-j/2}\,f_j(\mathbf{v})\,,
\end{align}
and similarly, adapting \eqref{eq:piknu} in this case, we have
\begin{align} \label{eq:piknuadapt}
&\Pi_{k\varpi}\left(x(\kappa,\,h)+\mathrm{d}_{x(h)}\gamma_X^{-1}\left(\frac{\mathbf{v}}{\sqrt{k}}\right),\,x(\kappa,\,h)+\mathrm{d}_{x(h)}\tilde{\mu}_{\kappa}\left(\frac{\mathbf{v}}{\sqrt{k}}\right)\right)\,  \\
&\qquad\sim  \frac{1}{(\sqrt{2}\,\pi)^{g-1}\,\mathcal{D}(m) }\,\left(\lVert \varpi \rVert\cdot\frac{k}{\pi}  \right)^{\mathrm{d}+(1-\mathrm{g})/2}\cdot\left( \frac{1}{\lVert \Phi(m) \rVert}  \right)^{\mathrm{d}+(1-\mathrm{g})/2+1} \notag\\
&\qquad\,\cdot  \left(\sum_{t\in \mathbf{T}}\overline{\chi_{k\varpi}(t)}\cdot e^{\lambda_{\varpi}(m)\,\left[\psi_2\left(\mathrm{d}_{x(h)}\gamma_M^{-1}\left(\mathrm{d}_m\tilde{\mu}_{t}(\mathbf{v}_{\mathrm{h},\,\mathrm{nor}})\right),\, \mathrm{d}_{x(h)}\tilde{\mu}_{\kappa}\left(\mathbf{v}_{\mathrm{h},\,\mathrm{nor}}\right) \right) -2\,\lVert \mathbf{v}_{\mathrm{t}} \rVert^2\right]}\right) \notag \\
&\qquad\, \cdot \left(1+\sum_{j\geq 1}R_j(m,\,\mathbf{v})\,k^{-j/2} \right)\,. \notag
\end{align}

Every vector $\mathbf{v}\in T_m^{\mathrm{h}}M_{\varpi}\setminus\{\boldsymbol{0}\}$ descends naturally to a non vanishing vector in $T_{m_{0}}\overline{M}_{\varpi}$ (where $m_0=p_{M,\,\varpi}(m)$):
\begin{align*} 
 T_m^{\mathrm{h}}M_{\varpi} &\rightarrow T_{m_0}\overline{M}_{\varpi} \\
\mathbf{v}  &\mapsto \mathbf{v}_{\varpi}\,.  
\end{align*}
Thus $\psi_2$ descends to a function on $T_m\overline{M}_{\varpi}$. Furthermore, by Definition \ref{def:cl}, for ever $t\in \mathbb{T}_m$ let us denote with $\Lambda_l^{(t)}\in U(c_l)$ the restriction of $\mathrm{d}\gamma_{\overline{M}_{\varpi}}\circ\mathrm{d}\mu_{t^{-1}}$ to the normal subspace of $F_{\overline{M}_{\varpi},\,l}$ at $m_0$ in the induced coordinates. Let $\{\mathbf{v}_1,\dots\mathbf{v}_{c_l}\}$ be an orthonormal basis of $\mathbb{C}^{c_l}$ composed of eigenvectors of $\Lambda_l^{(t)}$, with the corresponding eigenvalues $\lambda_1^{(t)}\dots\lambda_{c_{l}}^{(t)}\in S^{1}\setminus \{1\}$. If $(\mathbf{v}_{\mathrm{h},\mathrm{nor}})_{\varpi}=\sum_{j=1}^{c_l}a_j\,\mathbf{v}_j$, we have
\begin{equation} \label{eq:psi2lambda}
\psi_2\left(  \left(\Lambda_l^{(t)}\right)^{-1}(\mathbf{v}_{\mathrm{h},\,\mathrm{nor}} )_{\varpi},\,(\mathbf{v}_{\mathrm{h},\,\mathrm{nor}})_{\varpi}\right)=\sum_{j=1}^{\mathrm{c}_l}(\bar{\lambda}_j^{(t)}-1)\,\lvert a_j \rvert^2\,.
\end{equation}
Furthermore, notice that
\begin{equation} \label{eq:change}
\lambda_{\varpi}(m)\,\psi_2\left( \mathbf{u} ,\,\mathbf{w}\right) = \,\psi_2\left( \sqrt{\lambda_{\varpi}(m)}\,\mathbf{u} ,\,\sqrt{\lambda_{\varpi}(m)}\,\mathbf{w}\right)= \psi_2\left( \mathbf{u}' ,\,\mathbf{w}'\right) \,,
\end{equation}
where we set $\mathbf{v}'=\sqrt{\lambda_{\varpi}(m)}\,\mathbf{v}$.

Arguing in a similar way as in \cite{pao-tf} (see especially equations $(63)$ and $(64)$) and making use of \eqref{eq:psi2lambda} and \eqref{eq:change}, we obtain
\begin{align} \label{eq:secondint}
&\sum_{t\in \mathbf{T}} \chi_{\varpi}(t)^k\,\int_{\mathbb{C}^{c_l}}\exp\left({\lambda_{\varpi}(m)\,\psi_2\left(  \left(\Lambda_l^{(t)}\right)^{-1}(\mathbf{v}_{\mathrm{h},\,\mathrm{nor}} )_{\varpi},\,(\mathbf{v}_{\mathrm{h},\,\mathrm{nor}})_{\varpi}\right) }\right) \mathrm{d}\mathcal{L}(\mathbf{v}_t) \\
&= \lambda_{\varpi}(m)^{c_l}\cdot\sum_{t\in \mathbf{T}} \chi_{\varpi}(t)^k\,\int_{\mathbb{C}^{c_l}}\exp\left({\sum_{j=1}^{\mathrm{c}_l}(\bar{\lambda}_j^{(t)}-1)\,\lvert a_j \rvert^2 }\right) \mathrm{d}a \notag \\&= \lambda_{\varpi}(m)^{c_l}\cdot\sum_{t\in \mathbf{T}} \chi_{\varpi}(t)^k\,\prod_{j=1}^{\mathrm{c}_l}\int_{\mathbb{C}^{c_l}}\exp\left({(\bar{\lambda}_j^{(t)}-1)\,\lvert u \rvert^2 }\right) \mathrm{d}u \notag \\&= \lambda_{\varpi}(m)^{c_l}\cdot\sum_{t\in \mathbf{T}} \chi_{\varpi}(t)^k\,\prod_{j=1}^{\mathrm{c}_l}\frac{1}{\bar{\lambda}_j^{(t)}-1 }  \notag \\
& = \lambda_{\varpi}(m)^{c_l}  \cdot \sum_{t\in \mathbf{T}} \chi_{\varpi}(t)^k\,\frac{\pi^{\mathrm{c}_l}}{c_l^{(t)}(\gamma)}\,. \notag
\end{align}

Furthermore, we are led to compute the following integral
\begin{align} \label{eq:firstint}
\int_{\mathbb{R}^{g-1}}e^{-2\,\lambda_{\varpi}(m)\,\lVert \mathbf{v}_t\rVert^2}\, \mathrm{d}\mathcal{L}(\mathbf{v}_t)\,=  \left(\frac{\pi}{2\,\lambda_{\varpi}(m)}\right)^{(g-1)/2}\,.
\end{align}
To complete the proof of Theorem \ref{thm:main} we need only to insert \eqref{eq:piknuadapt} and \eqref{eq:f} in \eqref{eq:tracelj}, making use of \eqref{eq:firstint} and \eqref{eq:secondint}. Eventually, notice that the integral over $F_{\overline{X}_{\varpi},l}$ can be written as an integral over $F_{\overline{M}_{\varpi},l}$ by simply pulling back the measure $\mathrm{dV}_{{\overline{X}_{\varpi}}}$ on $\mathrm{dV}_{X}$ and then pushing forward on $F_{\overline{M}_{\varpi},l}$:
\[ p_{\overline{M}_{\varpi}}^*(\mathrm{dV}_{F_{\overline{M}_{\varpi},l}})=\lVert \Phi(m) \rVert\,p_{\overline{X}_{\varpi}}^*(\mathrm{dV}_{F_{\overline{M}_{\varpi},l}})\,.  \]

\end{document}